\newtheorem{theorem}{Theorem}[section]
\newtheorem{corollary}{Corollary}[section]
\newtheorem{lemma}{Lemma}[section]
\theoremstyle{definition}
\numberwithin{equation}{section}
\begin{document}
\setcounter{page}{1}

\vspace*{1.0cm}
\title[Generalized Hilbert operators]
{Generalized Hilbert operators acting from Hardy spaces to weighted Bergman spaces}
\author[L. Wang, S. Ye]{ Liyi Wang, Shanli Ye$^{*}$}
\maketitle
\vspace*{-0.6cm}

\begin{center}
{\footnotesize {\it

School of Science, Zhejiang University of Science and Technology, Hangzhou 310023, China.

}}\end{center}

\vskip 4mm {\small\noindent {\bf Abstract.}
Let $\mu$ be a positive Borel measure on the interval $[0,1)$. For $\alpha>0$, the generalized Hankel matrix $\mathcal{H}_{\mu, \alpha}=(\mu_{n, k, \alpha})_{n, k \geq 0}$ with entries $\mu_{n, k, \alpha}=\int_{[0,1)} \frac{\Gamma(n+\alpha)}{n ! \Gamma(\alpha)} t^{n+k} \mathrm{d}\mu(t)$ induces formally the operator
\begin{equation*}
\mathcal{H}_{\mu, \alpha}(f)(z)=\sum_{n=0}^{\infty}\left(\sum_{k=0}^{\infty} \mu_{n, k, \alpha} a_k\right) z^n
\end{equation*}
on the space of all analytic function $f(z)=\sum_{k=0}^{\infty} a_{k} z^{k}$ in the unit disk $\mathbb{D}$. In this paper, we characterize the measures $\mu$ for which $\mathcal{H}_{\mu, \alpha}(f)$ is well defined on the Hardy spaces $H^p(0<p<\infty)$ and satisfies $\mathcal{H}_{\mu, \alpha}(f)(z)=\int_{[0,1)} \frac{f(t)}{(1-t z)^\alpha} \mathrm{d} \mu(t)$. Among these measures, we further describe those for which $\mathcal{H}_{\mu, \alpha}(\alpha>1)$ is a bounded (resp., compact) operator from the Hardy spaces $H^p(0<p<\infty)$ into the weighted Bergman spaces $A_{\alpha-2}^q $.

\noindent {\bf Keywords.}
Generalized Hilbert operators; Hardy space; Weighted Bergman space; Carleson measure. }

\renewcommand{\thefootnote}{}
\footnotetext{ $^*$Corresponding author.
\par
E-mail addresses: wlywwwly@163.com (L. Wang), slye@zust.edu.cn (S. Ye).
\par
((Received XX, XXXX; Accepted XX, XXXX. ))}

\section{Introduction}
Let $\mathbb{D}=\{z \in \mathbb{C}:|z|<1\}$ be the open unit disk in the complex plane $\mathbb{C}$, $H(\mathbb{D})$ denotes the class of all analytic functions on $\mathbb{D}$.

If $0<r<1$ and $f \in H(\mathbb{D})$, we set
\begin{equation*}
\aligned
& M_{p}(r, f)=\left(\frac{1}{2 \pi} \int_{0}^{2 \pi}|f(r \mathrm{e}^{i \theta})|^{p} \mathrm{d} \theta\right)^{\frac{1}{p}}, \quad 0<p<\infty . \\
& M_{\infty}(r, f)=\sup _{|z|=r}|f(z)| .
\endaligned
\end{equation*}
For $0<p \leq \infty$, the Hardy space $H^{p}$ consists of those $f \in H(\mathbb{D})$ such that
\begin{equation*}
\|f\|_{H^{p}} \stackrel{\text { def }}= \sup _{0<r<1} M_{p}(r, f)<\infty.
\end{equation*}
We refer to \cite{1} for the notation and results regarding Hardy spaces.

For $0<p<\infty$ and $\alpha>-1$, the weighted Bergman space $A_{\alpha}^{p}$ consists of those $f \in H(\mathbb{D})$ such that
\begin{equation*}
\|f\|_{A_{\alpha}^{p}} \stackrel{\text{def}}=\left((\alpha+1) \int_{\mathbb{D}}|f(z)|^{p}(1-|z|^{2})^{\alpha} \mathrm{d} A(z)\right)^{\frac{1}{p}}<\infty.
\end{equation*}
Here, $\mathrm{d} A(z)=\frac{1}{\pi} \mathrm{d} x \mathrm{d} y$ denotes the normalized Lebesgue area measure on $\mathbb{D}$. The notation $A^{p}$ is a simplified representation for the unweighted Bergman space $A_{0}^{p}$. We refer to \cite{2,3} for the notation and results about Bergman spaces.

The Bloch space $\mathcal{B}$ (cf.\cite{4}) consisting of those functions $f \in H(\mathbb{D})$ such that
\begin{equation*}
\|f\|_{\mathcal{B}} \stackrel{\text { def }}{=}|f(0)|+\sup _{z \in \mathbb{D}}(1-|z|^{2})|f^{\prime}(z)|<\infty .
\end{equation*}

For $0 \leq a < \infty$ and $0<s<\infty$, a positive Borel measure $\mu$ on $\mathbb{D}$ will be called an $a$-logarithmic $s$-Carleson measure if there exists a positive constant $C$ such that
\begin{equation*}
\sup_{I} \frac{\big(\log \frac{2 \pi}{|I|}\big)^{a} \mu(S(I))}{|I|^{s}} \leq C,
\end{equation*}
the Carleson square $S(I)$ is defined as
$S(I)=\{z=r \mathrm{e}^{i t}: \mathrm{e}^{i t} \in I ;\, 1-\frac{|I|}{2 \pi} \leq r \leq 1\}$,
where $I$ is an interval of $\partial \mathbb{D}$ and $|I|$ denotes its length.
When $a=0$, it corresponds to the classical $s$-Carleson measure, characterized by the condition $\sup_{I} \frac{\mu(S(I))}{|I|^{s}} \leq C$.
If $\mu$ satisfies $(\log \frac{2 \pi}{\mid I})^a\mu(S(I))=o(|I|^s)$ as $|I| \rightarrow 0$, we describe $\mu$ as a vanishing $a$-logarithmic $s$-Carleson measure. See \cite{5,6} for more details.

A positive Borel measure on $[0,1)$ can be seen as a Borel measure on $\mathbb{D}$ by identifying it with the measure $\tilde{\mu}$ defined by
\begin{equation*}
\tilde{\mu}(E)=\mu(E \cap [0,1))
\end{equation*}
for any Borel subset $E$ of $\mathbb{D}$. In this way, we say that a positive Borel measure $\mu$ on $[0,1)$ is an $a$-logarithmic $s$-Carleson measure on $\mathbb{D}$ if and only if there exists a constant $C>0$ such that
\begin{equation*}
\big(\log {\frac{2}{1-t}}\big)^a    \mu([t, 1)) \leq C(1-t)^{s}, \quad 0 \leq t <1.
\end{equation*}

Next, we introduce the operators discussed in this article. The study of operators induced by Hankel matrices has consistently been a popular research direction, emerging with various types of research based on different descriptions of the measure $\mu$, for instance, one can refer to \cite{7,8,9}. In this article, we consider $\mu$ as a positive Borel measure on the interval $[0,1)$.

For $\alpha>0$, we define $\mathcal{H}_{\mu, \alpha}=(\mu_{n, k, \alpha})_{n, k \geq 0}$ to be the generalized Hankel matrix with entries $\mu_{n, k, \alpha}=\int_{[0,1)}$ $\frac{\Gamma(n+\alpha)}{n ! \Gamma(\alpha)} t^{n+k} \mathrm{d} \mu(t)$. For analytic functions $f(z)=\sum_{k=0}^{\infty} a_{k} z^{k}$, the generalized Hilbert operators $\mathcal{H}_{\mu, \alpha}$ are defined by
\begin{equation}\label{Eq:1}
\mathcal{H}_{\mu, \alpha}(f)(z)=\sum_{n=0}^{\infty}\left(\sum_{k=0}^{\infty} \mu_{n, k, \alpha} a_{k}\right) z^{n},
\end{equation}
whenever the right hand side make sense and define an analytic function in $\mathbb{D}$.

The operator $\mathcal{H}_{\mu, 1}$ (namely, $\mathcal{H}_{\mu}$) has been extensively studied, for example, in \cite{10,11,12,13}. The case $\alpha=2$ has been considered in \cite{14,15,16,17,18} where the operator $\mathcal{H}_{\mu, 2}$ was marked $\mathcal{DH}_{\mu}$. In \cite{19,20}, $\mathcal{H}_{\mu, \alpha}(\alpha>0)$ were referred to as the generalized Hilbert operators.

Another generalized
integral Hilbert operator related to $\mathcal{H}_{\mu, \alpha}$, denoted by $\mathcal{I}_{\mu, \alpha}(\alpha>0)$,  is defined by
\begin{equation*}
\mathcal{I}_{\mu, \alpha}(f)(z)=\int_{[0,1)} \frac{f(t)}{(1-t z)^{\alpha}} \mathrm{d} \mu(t),
\end{equation*}
whenever the right hand side makes sense and defines an analytic function in $\mathbb{D}$.
Galanopoulos and Peláez \cite{10} characterized the measures $\mu$ for which $\mathcal{H}_{\mu, 1}$ is bounded (resp., compact) on the Hardy space $H^{1}$, and proved that $\mathcal{H}_{\mu, 1}$ is equivalent to $\mathcal{I}_{\mu, 1}$ when $\mu$ is a Carleson measure. Ye and Zhou generalized these operators in 2021, they characterized the measures $\mu$ for which $\mathcal{DH}_{\mu}$   and $\mathcal{I}_{\mu,2}$ are equivalent, and for which these operators are bounded (resp.,compact) on Bloch space \cite{14} and Bergman space \cite{15}. Later, they provided the equivalent condition for $\mathcal{H}_{\mu, \alpha}$ and $\mathcal{I}_{\mu, \alpha}$ in the case where $\alpha>0$ on Bloch space \cite{19}. Ye and Feng extended this work to weighted Bergman spaces and Dirichlet spaces in \cite{20}.

In this paper, we characterize the measures $\mu$ for which $\mathcal{H}_{\mu, \alpha}(f)\,( \alpha>0)$ is well defined on the Hardy spaces $H^p(0<p<\infty)$ and satisfies $\mathcal{H}_{\mu, \alpha}(f)(z)=\mathcal{I}_{\mu, \alpha}(f)(z)$. Then we determine the measures $\mu$ for which $\mathcal{H}_{\mu, \alpha}$ is a bounded (resp., compact) operator from the Hardy spaces $H^p(0<p<\infty)$ into the weighted Bergman spaces $A_{\alpha-2}^q (\alpha>1)$ and into the  Bloch space $\mathcal{B}$. Throughout this work, $C$ denotes a constant which may be different in each case. For any given $p>1$, $p^{\prime}$ represents the conjugate index of $p$, that is, $1 / p+1 / p^{\prime}=1$.

\section{Conditions such that $\mathcal{ H}_{\mu,\alpha}$ is well Defined on Hardy Spaces}
In this section, we establish a sufficient condition such that $\mathcal{H}_{\mu,\alpha}$ is well defined in $H^p (0<p<\infty)$. Moreover, we demonstrate that $\mathcal{H}_{\mu,\alpha}(f)=\mathcal{I}_{\mu,\alpha}(f)$ for all $f \in H^{p}$ under this condition.

\begin{lemma}\label{lm:1} \cite[p.98]{1}
If
\begin{equation*}
f(z)=\sum_{n=0}^{\infty} a_{n} z^{n} \in H^{p}, \quad 0<p \leq 1,
\end{equation*}
then $a_{n}=o(n^{1 / p-1})$, and $|a_{n}| \leq C n^{1 / p-1}\| f \|_{H^{p}}$.
\end{lemma}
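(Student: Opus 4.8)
The plan is to reduce the coefficient estimate to a sharp bound on the integral mean $M_1(r,f)$ combined with the Cauchy integral formula, and then to upgrade the resulting $O(n^{1/p-1})$ bound to $o(n^{1/p-1})$ by an approximation argument. Two classical facts about $H^p$ drive everything. The first is the growth estimate $M_\infty(r,f)\le C(1-r)^{-1/p}\|f\|_{H^p}$, which itself follows from the canonical factorization $f=Bg$ (with $B$ a Blaschke product and $g$ zero-free, $\|g\|_{H^p}=\|f\|_{H^p}$): since $g$ is zero-free, $h=g^{p/2}$ is a well-defined member of $H^2$ with $\|h\|_{H^2}=\|f\|_{H^p}^{p/2}$, and the elementary bound $M_\infty(r,h)\le (1-r^2)^{-1/2}\|h\|_{H^2}$ (Cauchy--Schwarz on $\sum|c_n|r^n$) yields the claim after raising to the power $2/p$ and using $|f|\le|g|=|h|^{2/p}$.

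First I would obtain the big-$O$ bound. Splitting the integrand as $|f|=|f|^{1-p}|f|^p$ and using $0<p\le 1$ gives
\[
M_1(r,f)\le M_\infty(r,f)^{1-p}M_p(r,f)^{p}\le C(1-r)^{-(1-p)/p}\|f\|_{H^p}^{1-p}\,\|f\|_{H^p}^{p}=C(1-r)^{1-1/p}\|f\|_{H^p}.
\]
Cauchy's formula on the circle $|z|=r$ gives $|a_n|\le r^{-n}M_1(r,f)$, hence $|a_n|\le C\,r^{-n}(1-r)^{1-1/p}\|f\|_{H^p}$. Choosing $r=1-1/n$ makes $r^{-n}=(1-1/n)^{-n}$ bounded (it tends to $e$) and $(1-r)^{1-1/p}=n^{1/p-1}$, so $|a_n|\le C\,n^{1/p-1}\|f\|_{H^p}$, the stated inequality. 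The point I would emphasize is that the naive route of bounding $|a_n|\le M_\infty(r,f)/r^n$ directly loses a full power of $n$ and yields only $n^{1/p}$; passing through the interpolated mean $M_1$ is exactly what recovers the sharp exponent $1/p-1$.

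Finally I would deduce $a_n=o(n^{1/p-1})$ from the big-$O$ bound by approximating $f$ with its dilations $f_\rho(z)=f(\rho z)$, which satisfy $\|f-f_\rho\|_{H^p}\to 0$ as $\rho\to 1^-$. Since the Taylor coefficients of $f_\rho$ are $a_n\rho^n$, applying the big-$O$ bound to $f-f_\rho$ gives $|a_n|(1-\rho^n)n^{1-1/p}\le C\|f-f_\rho\|_{H^p}$. Fixing $\rho$ and letting $n\to\infty$ (so $1-\rho^n\to 1$, while $|a_n|n^{1-1/p}$ stays bounded) yields $\limsup_n|a_n|n^{1-1/p}\le C\|f-f_\rho\|_{H^p}$; then letting $\rho\to 1^-$ forces the $\limsup$ to be $0$. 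I expect the genuinely delicate points to be the sharp exponent in the $M_\infty$ estimate (which needs factorization rather than a crude subharmonicity argument, the latter producing the wrong power $(1-r)^{-2/p}$) and the interchange of limits in the $o$-argument, both of which are routine once the two ingredients are in place.
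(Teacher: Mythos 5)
Your proof is correct. The paper does not prove this lemma at all --- it is quoted from Duren's book \cite[p.~98]{1} --- and your argument is essentially the classical one given there: the pointwise growth bound $M_\infty(r,f)\le C(1-r)^{-1/p}\|f\|_{H^p}$ via canonical factorization, the interpolation $M_1(r,f)\le M_\infty(r,f)^{1-p}M_p(r,f)^p$ yielding $M_1(r,f)=O((1-r)^{1-1/p})$, Cauchy's formula with $r=1-1/n$, and then an approximation step to upgrade $O$ to $o$ (Duren approximates by polynomials where you use dilations $f_\rho$, an immaterial difference since both rest on density of nice functions in $H^p$).
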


\begin{lemma}\label{lm:2}  \cite[p.95]{1}
If
\begin{equation*}
f(z)=\sum_{n=0}^{\infty} a_{n} z^{n} \in H^{p}, \quad 0<p \leq 2,
\end{equation*}
then $\sum n^{p-2}|a_{n}|^{p}<\infty$, and
\begin{equation*}
\left\{\sum_{n=0}^{\infty}(n+1)^{p-2}|a_{n}|^{p}\right\}^{1/p} \leq C\|f\|_{H^{p}}.
\end{equation*}
\end{lemma}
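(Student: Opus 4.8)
The plan is to recognize this as the classical Hardy--Littlewood coefficient inequality and to prove the nontrivial estimate $\big(\sum_{n}(n+1)^{p-2}|a_n|^p\big)^{1/p}\le C\|f\|_{H^p}$ by anchoring at two exact endpoints and filling in the range by interpolation, treating $0<p<1$ separately. The anchor at $p=2$ is immediate: by Parseval, $\sum_n(n+1)^{0}|a_n|^2=\|f\|_{H^2}^2$, so the inequality holds with $C=1$, in fact with equality. The second anchor is $p=1$, where the claim reads $\sum_n(n+1)^{-1}|a_n|\le C\|f\|_{H^1}$; this is precisely Hardy's inequality for $H^1$, which I would invoke as a classical fact. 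Note that the pointwise bound $|a_n|\le Cn^{1/p-1}\|f\|_{H^p}$ of Lemma \ref{lm:1} is too weak by itself: inserting it leaves the divergent tail $\sum n^{-1}$, so some averaged/global input is unavoidable.

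For the intermediate range $1\le p\le2$ I would interpolate. Consider the linear map $T\colon f\mapsto(a_n)_{n\ge0}$. The two anchors say exactly that $T\colon H^2\to\ell^2$ and $T\colon H^1\to\ell^1\big((n+1)^{-1}\big)$ are bounded, where $\ell^q(w)$ denotes the weighted sequence space with $\|(x_n)\|_{\ell^q(w)}^q=\sum_n w_n|x_n|^q$. Using the standard complex interpolation identity $[H^1,H^2]_\theta=H^p$ with $1/p=1-\theta/2$, together with Stein--Weiss interpolation of weighted sequence spaces, the target space is $[\ell^1((n+1)^{-1}),\ell^2]_\theta=\ell^p(w)$ with $w_n^{1/p}=((n+1)^{-1})^{1-\theta}$; since $1-\theta=(2-p)/p$ this gives $w_n=(n+1)^{-p(1-\theta)}=(n+1)^{p-2}$, exactly the weight in the statement. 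Hence $T\colon H^p\to\ell^p((n+1)^{p-2})$ is bounded for $1\le p\le2$, which is the desired inequality on that range.

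The range $0<p<1$ is where the clean interpolation breaks down---there is no classical endpoint below $p=1$---and this is the step I expect to be the main obstacle. Here I would fall back on the real-variable structure of $H^p$: take an atomic decomposition $f=\sum_k\lambda_k\,a_k$ with $\sum_k|\lambda_k|^p\le C\|f\|_{H^p}^p$, and reduce the inequality to a single uniform estimate $\sum_n(n+1)^{p-2}|\widehat{a_k}(n)|^p\le C$ on atoms. Granting that atomic bound, the elementary subadditivity $|a_n|^p=\big|\sum_k\lambda_k\widehat{a_k}(n)\big|^p\le\sum_k|\lambda_k|^p|\widehat{a_k}(n)|^p$ (valid since $p\le1$) lets me sum term by term and conclude $\sum_n(n+1)^{p-2}|a_n|^p\le\sum_k|\lambda_k|^p\cdot C\le C'\|f\|_{H^p}^p$. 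The real work is therefore the coefficient estimate for an individual atom: using its size normalization $\|a_k\|_\infty\lesssim|I_k|^{-1/p}$ and its vanishing moment(s) to control $\widehat{a_k}(n)$ for small $n$, one checks that the weighted sum is bounded independently of the atom. This computation, together with the bookkeeping needed to make the atomic decomposition for analytic $H^p$ precise, is the technical heart of the argument, whereas the range $1\le p\le2$ reduces to citing two classical inequalities plus interpolation.
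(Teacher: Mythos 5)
The paper never proves Lemma \ref{lm:2}: it is quoted, with attribution, from Duren's book \cite[p.~95]{1}, where it is the classical Hardy--Littlewood coefficient inequality. So your proposal is not competing with an internal argument but with a citation, and judged on its own terms it is a correct, if heavy, reconstruction. The endpoints are right (Parseval at $p=2$, Hardy's inequality at $p=1$), and your remark that the pointwise bound of Lemma \ref{lm:1} only produces the divergent series $\sum n^{-1}$ correctly identifies the borderline nature of the result --- indeed cheaper routes such as Hausdorff--Young plus H\"older, or the block estimates coming from $M_2(r,f)\le C(1-r)^{1/2-1/p}\|f\|_{H^p}$, all fail by exactly this logarithmic margin, so a genuine $p=1$ endpoint is unavoidable. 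Your Stein--Weiss bookkeeping checks out: $w_n=(n+1)^{-p(1-\theta)}=(n+1)^{p-2}$ since $p(1-\theta)=2-p$. The costs of your route are twofold. First, for $1\le p\le 2$ you invoke $[H^1,H^2]_\theta=H^p$; this is true and independent of the present lemma (so no circularity), but it is itself a nontrivial theorem --- the Riesz projection is unbounded on $L^1$, so the couple $(H^1,H^2)$ is not a complemented subcouple of $(L^1,L^2)$ and the identification requires real work --- arguably deeper than what is being proved. Second, for $0<p<1$ everything rests on the uniform atom estimate $\sum_n(n+1)^{p-2}|\widehat{a}(n)|^p\le C$, which you describe but do not carry out; it does close: with $N=\lfloor 1/p-1\rfloor$ vanishing moments one has $|\widehat{a}(n)|\lesssim |I|^{1-1/p}(n|I|)^{N+1}$ for $n\lesssim 1/|I|$ and $|\widehat{a}(n)|\le |I|^{1-1/p}$ for $n\gtrsim 1/|I|$, and both weighted sums are then $O(1)$ precisely because $(N+2)p>1$; this moment count, together with the identification of analytic $H^p(\mathbb{D})$ with a subspace of real $H^p(\mathbb{T})$ admitting Coifman--Weiss atomic decompositions, is the part of your sketch that must actually be checked. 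By contrast, the proof behind the paper's citation is self-contained classical function theory from 1970, predating both atomic decompositions and Hardy-space interpolation, which is why the authors can treat the statement as a one-line lemma. Your approach buys a modular, conceptually transparent picture (two endpoints plus interpolation, atoms below $p=1$) at the price of much heavier prerequisites.
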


\begin{lemma}\label{lm:3} \cite{21}
For $0<p \leq q<\infty$, $\mu$ is a $q / p$-Carleson measure if and only if there exists a positive constant $C$ such that
\begin{equation}\label{Eq:2}
\left\{\int_{[0,1)}|f(t)|^{q} \mathrm{d} \mu(t)\right\}^{\frac{1}{q}} \leq C\|f\|_{H^{p}}, \quad \text { for all } f \in H^{p}.
\end{equation}
\end{lemma}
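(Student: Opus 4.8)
The plan is to set $s=q/p$, note that $s\ge 1$, and prove the two implications separately. Throughout I would use the reformulation recorded in the introduction: for a measure carried by $[0,1)$, being a $q/p$-Carleson measure is equivalent to $\mu([t,1))\le C(1-t)^{q/p}$ for all $0\le t<1$. So necessity amounts to extracting this decay estimate from the embedding inequality \eqref{Eq:2}, and sufficiency amounts to deriving \eqref{Eq:2} from the decay estimate.

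For necessity I would argue with normalized test functions. For each $t\in[0,1)$ set
\begin{equation*}
g_t(z)=\frac{(1-t^{2})^{1/p}}{(1-tz)^{2/p}},
\end{equation*}
and first check $\|g_t\|_{H^p}=1$ using the identity $\frac{1}{2\pi}\int_0^{2\pi}\frac{\mathrm d\theta}{|1-tr\mathrm e^{i\theta}|^{2}}=\frac{1}{1-t^{2}r^{2}}$ and letting $r\to1$. Next, for real $x\in[t,1)$ one has $1-tx\asymp 1-t$ (since $t^{2}\le tx< t$), whence $|g_t(x)|\gtrsim(1-t)^{-1/p}$. Feeding $g_t$ into \eqref{Eq:2} and restricting the integral to $[t,1)$ then gives
\begin{equation*}
(1-t)^{-q/p}\,\mu([t,1))\;\lesssim\;\int_{[0,1)}|g_t(x)|^{q}\,\mathrm d\mu(x)\;\le\;C^{q}\|g_t\|_{H^p}^{q}=C^{q},
\end{equation*}
which is exactly the $q/p$-Carleson condition.

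For sufficiency I would run the classical distribution-function argument. Let $f^{*}$ denote the nontangential maximal function of $f$, and for $\lambda>0$ put $E_\lambda=\{x\in[0,1):|f(x)|>\lambda\}$ and $O_\lambda=\{\zeta\in\partial\mathbb D:f^{*}(\zeta)>\lambda\}$. If $|f(z)|>\lambda$ then $f^{*}(\zeta)>\lambda$ for every $\zeta$ in the boundary arc $I(z)$ subtended by $z$, so $I(z)\subseteq O_\lambda$; thus $E_\lambda$ lies in the tent over $O_\lambda$. Decomposing the open set $O_\lambda$ into disjoint arcs $I_j$ and covering the tent by the Carleson squares $S(I_j)$, the hypothesis together with $s=q/p\ge1$ (which gives $\sum_j|I_j|^{s}\le(\sum_j|I_j|)^{s}$) yields $\mu(E_\lambda)\le C|O_\lambda|^{q/p}$. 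Therefore
\begin{equation*}
\int_{[0,1)}|f|^{q}\,\mathrm d\mu=q\int_0^{\infty}\lambda^{q-1}\mu(E_\lambda)\,\mathrm d\lambda\le Cq\int_0^{\infty}\lambda^{q-1}|O_\lambda|^{q/p}\,\mathrm d\lambda,
\end{equation*}
and then, by the Chebyshev bound $|O_\lambda|\le\lambda^{-p}\|f^{*}\|_{L^p}^{p}$ and $q/p\ge1$,
\begin{equation*}
\int_0^{\infty}\lambda^{q-1}|O_\lambda|^{q/p}\,\mathrm d\lambda\le\|f^{*}\|_{L^p}^{\,q-p}\int_0^{\infty}\lambda^{p-1}|O_\lambda|\,\mathrm d\lambda=\tfrac1p\|f^{*}\|_{L^p}^{\,q}.
\end{equation*}
Finishing with the Hardy--Littlewood maximal estimate $\|f^{*}\|_{L^p}\lesssim\|f\|_{H^p}$ gives \eqref{Eq:2}.

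I expect the main obstacle to be the sufficiency direction, and within it the geometric step: the containment of the interior superlevel set $E_\lambda$ in the tent over the boundary maximal set $O_\lambda$, and the estimate $\mu(\text{tent}(O_\lambda))\lesssim|O_\lambda|^{q/p}$ via the Whitney-type arc decomposition. The Chebyshev interpolation and the maximal-function bound are routine once that covering is in place. It is worth remarking that, because $\mu$ is supported on the radius $[0,1)$, one can simplify the tent covering considerably (the relevant squares reduce to intervals $[t,1)$), so the full boundary machinery is a convenience rather than a necessity here.
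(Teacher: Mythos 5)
The paper does not prove this lemma at all: it is quoted directly from Duren \cite{21}, so there is no internal proof to compare against, and your argument is correct --- indeed it is essentially a faithful reconstruction of Duren's original proof (normalized kernel test functions $g_t$ for necessity; nontangential maximal function, the containment of the superlevel set in the tent over $O_\lambda$, the inequality $\sum_j |I_j|^{s}\le\bigl(\sum_j |I_j|\bigr)^{s}$ for $s=q/p\ge 1$, and distribution-function integration for sufficiency). Your closing observation that the radial support of $\mu$ lets the tent covering collapse to intervals $[t,1)$ is also correct and matches how the paper itself uses the Carleson condition in the form $\mu([t,1))\le C(1-t)^{s}$.
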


\begin{theorem}\label{th:1}
Suppose that $0<p<\infty$, and let $\mu$ be a positive Borel measure on $[0,1)$. Then the power series in (\ref{Eq:1}) defines a well defined analytic function in $\mathbb{D}$ for every $f \in H^{p}$ in either of the two following cases:

(i) $\mu$ is a $1 / p$-Carleson measure if  $0<p \leq 1$;

(ii) $\mu$ is a 1-Carleson measure if $1<p<\infty$.

Furthermore, in such cases we have that
\begin{equation*}
\mathcal{H}_{\mu,\alpha}(f)(z)=\int_{[0,1)} \frac{f(t)}{(1-t z)^{\alpha}} \mathrm{d} \mu(t), \quad z \in \mathbb{D}.
\end{equation*}
\end{theorem}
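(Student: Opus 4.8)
The plan is to prove the integral identity and to read off well-definedness from it, rather than the reverse. First I would establish that $f$ is $\mu$-integrable on $[0,1)$, i.e. $\int_{[0,1)}|f(t)|\,\mathrm d\mu(t)<\infty$. In case (i), since $0<p\le1$ and $\mu$ is a $1/p$-Carleson measure, Lemma~\ref{lm:3} with $q=1$ gives directly $\int_{[0,1)}|f|\,\mathrm d\mu\le C\|f\|_{H^p}$. In case (ii), $\mu$ being $1$-Carleson forces $\mu([0,1))<\infty$; applying Lemma~\ref{lm:3} with $q=p$ (so that $q/p=1$) and then Hölder's inequality against the finite measure $\mu$ yields $\int_{[0,1)}|f|\,\mathrm d\mu\le\bigl(\int_{[0,1)}|f|^p\,\mathrm d\mu\bigr)^{1/p}\mu([0,1))^{1/p'}\le C\|f\|_{H^p}$.

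Once $f\in L^1([0,1),\mu)$, the integral $\mathcal I_{\mu,\alpha}(f)(z)=\int_{[0,1)}f(t)(1-tz)^{-\alpha}\,\mathrm d\mu(t)$ converges absolutely for each $z\in\mathbb D$ because $|1-tz|\ge1-|z|$ there, and a standard argument (differentiation under the integral sign, or Morera together with dominated convergence) shows it is analytic on $\mathbb D$. To read off its Taylor coefficients I would insert the binomial series $(1-tz)^{-\alpha}=\sum_{n\ge0}\frac{\Gamma(n+\alpha)}{n!\,\Gamma(\alpha)}t^nz^n$ and interchange sum and integral; this is legitimate since, for $|z|=r<1$, the bound $\sum_{n\ge0}\frac{\Gamma(n+\alpha)}{n!\,\Gamma(\alpha)}t^n r^n=(1-tr)^{-\alpha}\le(1-r)^{-\alpha}$ supplies an $L^1(\mu)$ dominating function. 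This gives
\[
\mathcal I_{\mu,\alpha}(f)(z)=\sum_{n=0}^{\infty}d_n z^n,\qquad d_n=\frac{\Gamma(n+\alpha)}{n!\,\Gamma(\alpha)}\int_{[0,1)}t^nf(t)\,\mathrm d\mu(t).
\]

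The heart of the proof is to identify $d_n$ with the $n$-th coefficient $\sum_{k\ge0}\mu_{n,k,\alpha}a_k$ of $\mathcal H_{\mu,\alpha}(f)$, and here a direct application of Fubini to $\sum_k a_k\int t^{n+k}\,\mathrm d\mu$ is unavailable: for $0<p\le1$ the function $\sum_k|a_k|t^k$ need not be $\mu$-integrable, so the series need not converge absolutely. I would circumvent this with a dilation-plus-Tauberian argument. Fixing $n$ and writing $\beta_m=\int_{[0,1)}t^m\,\mathrm d\mu(t)$, for $0<r<1$ the coefficients $a_kr^k$ decay geometrically, so Fubini does apply and gives $\sum_{k\ge0}a_kr^k\beta_{n+k}=\int_{[0,1)}t^nf(rt)\,\mathrm d\mu(t)$. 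Writing $f_r(z)=f(rz)$ and letting $r\to1^-$, the estimate $\bigl|\int_{[0,1)}t^n(f_r-f)\,\mathrm d\mu\bigr|\le\int_{[0,1)}|f_r-f|\,\mathrm d\mu\le C\|f_r-f\|_{H^p}$ (the integrability step applied to $f_r-f$) together with the standard fact $\|f_r-f\|_{H^p}\to0$ shows that the power series $\sum_k(\mu_{n,k,\alpha}a_k)x^k$ is Abel summable to $d_n$.

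Finally I would upgrade Abel summability to genuine convergence via the classical Tauberian theorem: if $\sum_k c_k x^k\to L$ as $x\to1^-$ and $kc_k\to0$, then $\sum_k c_k=L$. Here $c_k=\mu_{n,k,\alpha}a_k=\frac{\Gamma(n+\alpha)}{n!\,\Gamma(\alpha)}a_k\beta_{n+k}$. The moment estimate $\beta_m=O(m^{-\sigma})$ — obtained by writing $\beta_m=m\int_0^1 s^{m-1}\mu((s,1))\,\mathrm ds$ and invoking the $\sigma$-Carleson bound, with $\sigma=1/p$ in case (i) and $\sigma=1$ in case (ii) — combined with the coefficient decay $a_k=o(k^{1/p-1})$ from Lemma~\ref{lm:1} in case (i) and $a_k\to0$ (valid since $H^p\subset H^1$) in case (ii), yields $kc_k=o(1)$ in both cases. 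Tauber's theorem then gives $\sum_k\mu_{n,k,\alpha}a_k=d_n$, so every coefficient of $\mathcal H_{\mu,\alpha}(f)$ is well defined and equals $d_n$; since $|d_n|\le Cn^{\alpha-1}\int_{[0,1)}|f|\,\mathrm d\mu$ grows only polynomially, the series $\sum_n d_nz^n$ converges on $\mathbb D$ and equals the analytic function $\mathcal I_{\mu,\alpha}(f)$. I expect the verification of the $o(1/k)$ decay of the terms — the interplay between the Hardy-space coefficient bounds and the Carleson moment estimates — to be the main technical obstacle, precisely because absolute convergence is lost once $p\le1$.
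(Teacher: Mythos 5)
Your argument is correct, but it departs from the paper's proof at the decisive step, and the reason you give for the departure is mistaken. The paper proves, in case (i), that each coefficient series $\sum_{k}\mu_{n,k,\alpha}a_{k}$ converges \emph{absolutely}: writing $|a_{k}|=|a_{k}|^{p}|a_{k}|^{1-p}$ and using $|a_{k}|\leq C(k+1)^{1/p-1}\|f\|_{H^{p}}$ (Lemma \ref{lm:1}; the exponent $1-p\geq 0$ is what permits majorizing $|a_{k}|^{1-p}$ by this bound) together with the moment estimate $\int_{[0,1)}t^{n+k}\,\mathrm{d}\mu(t)\leq C(k+1)^{-1/p}$ (the same one you derive), one gets
\begin{equation*}
\sum_{k=0}^{\infty}|\mu_{n,k,\alpha}||a_{k}|\leq C\,\frac{\Gamma(n+\alpha)}{n!\,\Gamma(\alpha)}\sum_{k=0}^{\infty}(k+1)^{p-2}|a_{k}|^{p}<\infty
\end{equation*}
by Lemma \ref{lm:2}, after which Fubini's theorem gives $\mathcal{H}_{\mu,\alpha}(f)=\mathcal{I}_{\mu,\alpha}(f)$ immediately. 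So your claim that for $0<p\leq 1$ ``the function $\sum_{k}|a_{k}|t^{k}$ need not be $\mu$-integrable, so the series need not converge absolutely'' is false under the $1/p$-Carleson hypothesis: the Hardy--Littlewood estimates of Lemmas \ref{lm:1} and \ref{lm:2} are precisely what rescue absolute convergence. This is an error of strategy assessment, not of proof, since no step of your argument relies on the failure of absolute convergence.

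Your substitute route is sound in all its steps: the Fubini interchange at fixed dilation radius $r$ is legitimate because Carleson measures on $[0,1)$ are finite; Abel summability of $\sum_{k}\mu_{n,k,\alpha}a_{k}$ to $d_{n}$ follows from $\int_{[0,1)}|f_{r}-f|\,\mathrm{d}\mu\leq C\|f_{r}-f\|_{H^{p}}\to 0$; and the Tauberian hypothesis $k\,\mu_{n,k,\alpha}a_{k}\to 0$ holds by combining your moment bound with $a_{k}=o(k^{1/p-1})$ in case (i) and $a_{k}=o(1)$ in case (ii). The comparison is worth recording. The paper's method in case (i) is shorter and yields the stronger conclusion of absolute convergence, but the trick $|a_{k}|^{1-p}\leq C(k+1)^{(1/p-1)(1-p)}$ collapses when $p>1$, which is why the paper does not reprove the identity for $1<p<\infty$ and instead invokes \cite[Theorem 3]{11}. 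Your Abel--Tauber argument treats both cases uniformly and is self-contained (modulo the standard facts $\|f_{r}-f\|_{H^{p}}\to 0$ and Tauber's theorem), at the cost of delivering only conditional convergence of the coefficient series --- which is, however, all that well-definedness of the power series in (\ref{Eq:1}) requires.
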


\begin{proof}
This theorem is an extension of \cite[Theorem 3.3]{16}.

(i) Assuming $0<p \leq 1$ and $\mu$ is a $1 / p$-Carleson measure. Then Lemma \ref{lm:3} implies
\begin{equation*}
\int_{[0,1)}|f(t)| \mathrm{d} \mu(t) \leq C\|f\|_{H^{p}}, \quad \text { for all } f \in H^{p}.
\end{equation*}
Fixing $z$ with $|z|<r $ and  $f(z)=\sum_{k=0}^{\infty} a_{k} z^{k} \in H^{p}$, it follows that
\begin{equation*}
\int_{[0,1)} \frac{|f(t)|}{|1-t z|^{\alpha}} \mathrm{d} \mu(t) \leq \frac{1}{(1-r)^{\alpha}} \int_{[0,1)}|f(t)| \mathrm{d} \mu(t) \leq  \frac{C}{(1-r)^{\alpha}}\|f\|_{H^{p}}, \quad  0 <r<1 .
\end{equation*}
This implies that the integral $\int_{[0,1)} \frac{f(t)}{(1-t z)^{\alpha}}\mathrm{d}  \mu(t)$ uniformly converges on any compact subset of $\mathbb{D}$, the resulting function is analytic in $\mathbb{D}$ and, for every $z \in \mathbb{D}$,
\begin{equation}\label{Eq:3}
\mathcal{I}_{\mu, \alpha}(f)(z) =\int_{[0,1)} \frac{f(t)}{(1-t z)^{\alpha}} \mathrm{d} \mu(t)  =\sum_{n=0}^{\infty} \frac{\Gamma(n+\alpha)}{n ! \Gamma(\alpha)}\left(\int_{[0,1)} t^{n} f(t) \mathrm{d} \mu(t)\right) z^{n} .
\end{equation}
By Lemma \ref{lm:1} and \cite[Proposition 1]{11}, we have that for $f(z)=\sum_{k=0}^{\infty} a_{k} z^{k} \in H^{p}$, there exists $C>0$ such that
\begin{equation*}
\aligned
& |a_{k}| \leq C(k+1)^{1/p -1} \text { for all } n, k , \\
& |\mu_{n, k, \alpha}|=\frac{\Gamma(n+\alpha)}{n ! \Gamma(\alpha)}|\mu_{n+k}| \leq \frac{\Gamma(n+\alpha)}{n ! \Gamma(\alpha)} \frac{C}{(k+1)^{1/p}} .
\endaligned
\end{equation*}
Then we obtain that
\begin{equation*}
\aligned
\sum_{k=0}^{\infty}|\mu_{n, k, \alpha}||a_{k}| & \leq C \frac{\Gamma(n+\alpha)}{n ! \Gamma(\alpha)} \sum_{k=0}^{\infty} \frac{|a_{k}|}{(k+1)^{1 / p}}=C \frac{\Gamma(n+\alpha)}{n ! \Gamma(\alpha)} \sum_{k=0}^{\infty} \frac{|a_{k}|^{p}|a_{k}|^{1-p}}{(k+1)^{1 / p}} \\
& \leq C \frac{\Gamma(n+\alpha)}{n ! \Gamma(\alpha)} \sum_{k=0}^{\infty} \frac{|a_{k}|^{p}(k+1)^{(1/p-1)(1-p)}}{(k+1)^{1 / p}} \\
& =C \frac{\Gamma(n+\alpha)}{n ! \Gamma(\alpha)} \sum_{k=0}^{\infty}(k+1)^{p-2}|a_{k}|^{p} .
\endaligned
\end{equation*}
Combining Lemma \ref{lm:2} and (\ref{Eq:3}), it is easily yielded that the series in (\ref{Eq:1}) is well defined for all $z \in \mathbb{D}$ and that
\begin{equation*}
\aligned
\mathcal{H}_{\mu, \alpha}(f)(z) & =\sum_{n=0}^{\infty}\left(\int_{[0,1)} \frac{\Gamma(n+\alpha)}{n ! \Gamma(\alpha)} t^n f(t) \mathrm{d} \mu(t)\right) z^n=\int_{[0,1)} \sum_{n=0}^{\infty} \frac{\Gamma(n+\alpha)}{n ! \Gamma(\alpha)}(t z)^n f(t) \mathrm{d}\mu(t) \\
& =\int_{[0,1)} \frac{f(t)}{(1-t z)^\alpha} \mathrm{d} \mu(t)=\mathcal{I}_{\mu, \alpha}(f)(z), \quad z \in \mathbb{D} .
\endaligned
\end{equation*}

(ii) Suppose that $1<p<\infty$, since $\mu$ is a 1-Carleson measure and (\ref{Eq:2}) holds, the reasoning employed in the proof of (i) establishes that $\mathcal{I}_{\mu, \alpha}$ is a well defined analytic function in $\mathbb{D}$  for every $f \in H^{p}$, then we obtain that $\mathcal{H}_{\mu,\alpha}(f)=\mathcal{I}_{\mu_,\alpha}(f)$ by \cite[Theorem 3]{11}.
\end{proof}

\section{Boundedness of $\mathcal{H}_{\mu,\alpha}$ from $H^p$ into $A_{\alpha-2}^q$}
In this section, we characterize those measures $\mu$ for which $\mathcal{H}_{\mu,\alpha}$ is a bounded operator from $H^p$ into $A_{\alpha-2}^q$ for some $p$ and $q$ and into the classical Bloch space $\mathcal{B}$.

\begin{lemma}{\cite{22}}\label{lm:4}
For $0<p \leq q<\infty$ and $\alpha>-1$, then $\mu$ is a $\frac{(2+\alpha) q}{p}$-Carleson measure if and only if there exists a positive constant $C$ such that
\begin{equation*}
\left\{\int_{\mathbb{D}}|f(z)|^{q} \mathrm{d} \mu(z)\right\}^{\frac{1}{q}} \leq C\|f\|_{A_{\alpha}^{p}}, \quad \text { for all } f \in A_{\alpha}^{p}.
\end{equation*}
\end{lemma}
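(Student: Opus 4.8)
The assertion is a biconditional characterising the Carleson embedding $A_\alpha^p\hookrightarrow L^q(\mu)$, so I would prove the two implications separately; the direction ``embedding $\Rightarrow$ Carleson condition'' is the soft one, while ``Carleson condition $\Rightarrow$ embedding'' carries the analytic content. For necessity I would test the hypothesis against the standard extremal family: for $a\in\mathbb{D}$ put
\[
f_a(z)=\frac{(1-|a|^2)^{\,c-\frac{2+\alpha}{p}}}{(1-\bar a z)^{c}},
\]
with $c$ fixed so large that $cp>2+\alpha$. The classical growth integral $\int_{\mathbb{D}}(1-|z|^2)^\alpha|1-\bar a z|^{-cp}\,dA(z)\asymp(1-|a|^2)^{2+\alpha-cp}$ (valid since $\alpha>-1$) gives $\|f_a\|_{A_\alpha^p}\asymp1$, while on the Carleson box $S(I)$ whose arc is centred at $a/|a|$ with $|I|\asymp1-|a|$ one has $|1-\bar a z|\asymp1-|a|$, hence $|f_a|\asymp(1-|a|^2)^{-\frac{2+\alpha}{p}}$ there. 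Plugging $f_a$ into the hypothesis and retaining only the mass on $S(I)$ yields $(1-|a|^2)^{-\frac{(2+\alpha)q}{p}}\mu(S(I))\lesssim1$, which is exactly $\mu(S(I))\lesssim|I|^{\frac{(2+\alpha)q}{p}}$.

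For the converse, suppose $\mu$ is a $\frac{(2+\alpha)q}{p}$-Carleson measure. The workhorse is the subharmonicity estimate for a fixed pseudohyperbolic radius $r$,
\[
|f(w)|^p\le\frac{C}{(1-|w|^2)^{2+\alpha}}\int_{D(w,r)}|f(z)|^p(1-|z|^2)^\alpha\,dA(z),
\]
which holds because $|f|^p$ is subharmonic and $(1-|z|^2)^\alpha\asymp(1-|w|^2)^\alpha$ on the Bergman disk $D(w,r)$. I would first collapse the off-diagonal exponent: bounding the right-hand side by $\|f\|_{A_\alpha^p}^p$ produces the pointwise growth bound $|f(w)|^{q-p}\lesssim\|f\|_{A_\alpha^p}^{q-p}(1-|w|^2)^{-\frac{(2+\alpha)(q-p)}{p}}$, whence
\[
\int_{\mathbb{D}}|f|^q\,d\mu\lesssim\|f\|_{A_\alpha^p}^{q-p}\int_{\mathbb{D}}\frac{|f(w)|^p}{(1-|w|^2)^{\frac{(2+\alpha)(q-p)}{p}}}\,d\mu(w).
\]
Setting $d\nu(w)=(1-|w|^2)^{-\frac{(2+\alpha)(q-p)}{p}}\,d\mu(w)$, a dyadic annular splitting of each box $S(I)$ (each annulus of width $2^{-j}|I|$ covered by $\asymp2^{j}$ sub-boxes on which $\mu$ is controlled by the Carleson bound) shows $\nu(S(I))\lesssim|I|^{2+\alpha}$; the exponents subtract to exactly $2+\alpha$ and the resulting geometric series converges precisely because $\alpha>-1$.

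It then remains to settle the diagonal case $\int_{\mathbb{D}}|f|^p\,d\nu\lesssim\|f\|_{A_\alpha^p}^p$ for the $(2+\alpha)$-Carleson measure $\nu$, which I would close by the same subharmonicity estimate followed by Fubini: $\int_{\mathbb{D}}|f|^p\,d\nu$ is dominated by
\[
\int_{\mathbb{D}}|f(z)|^p(1-|z|^2)^\alpha\Big(\int_{D(z,r)}(1-|w|^2)^{-(2+\alpha)}\,d\nu(w)\Big)\,dA(z),
\]
and since $D(z,r)$ lies in a box of side $\asymp1-|z|$ the parenthesised factor is $\lesssim(1-|z|^2)^{-(2+\alpha)}\nu(D(z,r))\lesssim1$, leaving $\int_{\mathbb{D}}|f|^p\,d\nu\lesssim\|f\|_{A_\alpha^p}^p$. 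The crux of the whole argument is this converse direction, and inside it the passage from $q$ down to $p$: the delicate point is verifying that after dividing out the weight the new measure $\nu$ is Carleson of exactly order $2+\alpha$, for which the dyadic decomposition and the hypothesis $\alpha>-1$ are indispensable, whereas the necessity step and the concluding Fubini computation are essentially mechanical.
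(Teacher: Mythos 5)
Your proof is correct, but there is nothing in the paper to compare it against: the paper does not prove this lemma at all, it simply quotes it from Luecking \cite{22}. Your argument is, in substance, the standard proof of the Bergman--Carleson embedding theorem that underlies the cited result (and appears in textbook treatments, e.g.\ Zhu's book): test functions $f_a(z)=(1-|a|^2)^{c-(2+\alpha)/p}(1-\bar a z)^{-c}$ with $cp>2+\alpha$ for necessity, and for sufficiency the subharmonicity estimate on pseudohyperbolic disks, the reduction of the exponent from $q$ to $p$ by absorbing the pointwise growth bound into a new measure $\nu$, the dyadic annular decomposition of $S(I)$ showing $\nu(S(I))\lesssim|I|^{2+\alpha}$, and the closing Fubini argument. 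The details all check out: the exponent arithmetic $(2+\alpha)q/p-(2+\alpha)(q-p)/p=2+\alpha$ is right; the covering of the $j$-th annulus by $\asymp 2^j$ boxes of side $2^{-j}|I|$ gives $\nu(T_j)\lesssim 2^{-j(1+\alpha)}|I|^{2+\alpha}$, so the series converges precisely because $\alpha>-1$; and the symmetry $w\in D(z,r)\Leftrightarrow z\in D(w,r)$ together with $\nu(D(z,r))\lesssim(1-|z|)^{2+\alpha}$ legitimizes the final Fubini step. One cosmetic remark: when $q=p$ the off-diagonal collapse is vacuous ($\nu=\mu$) and the argument reduces immediately to your diagonal case, so the proof covers the full range $0<p\le q<\infty$ as claimed.
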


\begin{theorem}\label{th:2}
Suppose that $0<p<\infty$ and $\alpha>1$, let $\mu$ be a positive Borel measure on $[0,1)$ which satisfies the condition in Theorem \ref{th:1}.

(i) If $q>1$ and $q \geq \alpha p$, then $\mathcal{H}_{\mu,\alpha}$ is a bounded operator from $H^p$ into $A_{\alpha-2}^q$ if and only if $\mu$ is a $(1 / p+\alpha  / q^{\prime})$-Carleson measure;

(ii) If $q=1$ and $q \geq p$, then $\mathcal{H}_{\mu,\alpha}$ is a bounded operator from $H^p$ into $A_{\alpha-2}^1$ if and only if $\mu$ is a 1-logarithmic $1 / p$-Carleson measure;

(iii) $\mathcal{H}_{\mu,\alpha}$ is a bounded operator from $H^p$ into $\mathcal{B}$ if and only if $\mu$ is a $(1 / p + \alpha)$-Carleson measure.
\end{theorem}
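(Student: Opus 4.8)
The plan is to lean on Theorem~\ref{th:1}, which lets me replace $\mathcal{H}_{\mu,\alpha}$ by the integral operator $\mathcal{I}_{\mu,\alpha}(f)(z)=\int_{[0,1)}\frac{f(t)}{(1-tz)^{\alpha}}\,d\mu(t)$ throughout, and then treat the three parts with the Carleson-embedding machinery of Lemmas~\ref{lm:3} and~\ref{lm:4} together with the duality of weighted Bergman spaces.

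For part (i), where $1<q<\infty$, I would first pass to the dual. Since $(A^q_{\alpha-2})^{\ast}\cong A^{q'}_{\alpha-2}$ under the pairing $\langle h,g\rangle=\int_{\mathbb D}h\,\overline{g}\,(1-|z|^2)^{\alpha-2}\,dA$, boundedness is equivalent to a uniform bilinear estimate. Substituting the integral formula, applying Fubini, and recognizing $\frac{1}{(1-tz)^{\alpha}}$ as a multiple of the reproducing kernel of $A^2_{\alpha-2}$ at the real point $t$, the pairing collapses to $\langle\mathcal{H}_{\mu,\alpha}f,g\rangle=\int_{[0,1)}f(t)\,\overline{g(t)}\,d\mu(t)$, so the problem reduces to proving
$$\Big|\int_{[0,1)}f\,\overline g\,d\mu\Big|\lesssim \|f\|_{H^p}\,\|g\|_{A^{q'}_{\alpha-2}}.$$
The crucial step is then a single Hölder inequality against $\mu$ with the conjugate exponents $s=1+\alpha p/q'$ and $s'=1+q'/(\alpha p)$: one verifies $1/s+1/s'=1$, and that the embeddings $H^p\hookrightarrow L^{s}(\mu)$ (Lemma~\ref{lm:3}) and $A^{q'}_{\alpha-2}\hookrightarrow L^{s'}(\mu)$ (Lemma~\ref{lm:4}) each demand exactly that $\mu$ be a $(1/p+\alpha/q')$-Carleson measure, because $s/p=1/p+\alpha/q'=\alpha s'/q'$. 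The admissibility ranges $s\ge p$ and $s'\ge q'$ required by the two lemmas are precisely where the hypotheses $q\ge\alpha p$ and $q>1$ get consumed. For the converse I would test on the normalized family $f_{b}(z)=(1-b)^{c-1/p}(1-bz)^{-c}$, $\|f_b\|_{H^p}\approx 1$, bound $\mathcal{H}_{\mu,\alpha}(f_b)$ below on a Carleson box anchored at $b\in[0,1)$, and read off $\mu([b,1))\lesssim(1-b)^{1/p+\alpha/q'}$.

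For part (ii) ($q=1$) duality no longer lands in a Bergman space, so I would argue directly. Writing out the $A^1_{\alpha-2}$ norm, inserting the integral formula and applying Fubini, the inner integral $\int_{\mathbb D}(1-|z|^2)^{\alpha-2}|1-tz|^{-\alpha}\,dA(z)$ sits on the borderline exponent relation $\alpha=(\alpha-2)+2$, so it grows like $\log\frac{2}{1-t}$ instead of a power; this logarithm is the entire source of the logarithmic Carleson condition. The estimate then reduces to $\int_{[0,1)}|f(t)|\,d\lambda(t)\lesssim\|f\|_{H^p}$ with $d\lambda(t)=\log\frac{2}{1-t}\,d\mu(t)$, which by Lemma~\ref{lm:3} (exponent $1/p$, valid since $p\le q=1$) holds iff $\lambda$ is $1/p$-Carleson, i.e.\ iff $\mu$ is a $1$-logarithmic $1/p$-Carleson measure (the standard equivalence between the weight $\log\frac{2}{1-t}$ and the logarithmic Carleson scale). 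Necessity follows from the same $f_b$, using $\int_{[b,1)}\log\frac{2}{1-t}\,d\mu\ge\log\frac{2}{1-b}\,\mu([b,1))$.

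For part (iii) I would use the seminorm $\sup_z(1-|z|^2)|F'(z)|$ with $F=\mathcal{H}_{\mu,\alpha}(f)$. Differentiating under the integral gives $F'(z)=\alpha\int_{[0,1)}t\,f(t)(1-tz)^{-\alpha-1}\,d\mu(t)$; combining the pointwise Hardy bound $|f(t)|\lesssim(1-t)^{-1/p}\|f\|_{H^p}$ with the fact that $d\nu(t)=(1-t)^{-1/p}\,d\mu(t)$ is $\alpha$-Carleson whenever $\mu$ is $(1/p+\alpha)$-Carleson reduces matters to the standard estimate $\int_{[0,1)}|1-tz|^{-\alpha-1}\,d\nu(t)\lesssim(1-|z|)^{-1}$, which absorbs the factor $1-|z|^2$; necessity comes again from testing $f_b$ and evaluating $F'$ at $z=b$. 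I expect the main obstacle to be part (i): making the duality-plus-reproducing-kernel reduction clean and, above all, pinning down the unique pair of Hölder exponents for which the two distinct Carleson embeddings collapse onto the single exponent $1/p+\alpha/q'$ while still respecting the admissibility ranges forced by $q>1$ and $q\ge\alpha p$.
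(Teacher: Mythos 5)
Your parts (i) and (iii) are essentially sound. Part (i) is in substance the paper's own proof: the same Luecking duality $(A^q_{\alpha-2})^{*}\cong A^{q'}_{\alpha-2}$, the same collapse of the pairing to $\int_{[0,1)}f\,\overline{g}\,d\mu$ via the reproducing kernel $(1-t\bar z)^{-\alpha}$ of $A^1_{\alpha-2}$ (the paper keeps a dilation parameter $r$ to justify Fubini), and the identical H\"older exponents $s=1+\alpha p/q'$, $s'=1+q'/(\alpha p)$ feeding Lemmas \ref{lm:3} and \ref{lm:4}; your necessity step (a direct lower bound of $\|\mathcal H_{\mu,\alpha}(f_b)\|_{A^q_{\alpha-2}}$ over a Carleson box) replaces the paper's bilinear testing with the pair $f_a, g_a$ and does work, provided you record why $\mathrm{Re}\,(1-tz)^{-\alpha}\gtrsim |1-tz|^{-\alpha}$ for \emph{all} $t\in[0,1)$ when $z$ is in the box: this follows from $1-tz=(1-t)+t(1-z)$, whence $|\arg(1-tz)|\le|\arg(1-z)|$, after shrinking the box's aperture depending on $\alpha$. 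Part (iii), argued through $F'(z)$ and the pointwise Hardy bound rather than through the paper's duality $(A^1_{\alpha-2})^{*}\cong\mathcal B$, is a legitimate and more elementary alternative; evaluating $F'$ at the real point $b$, where the kernel is positive, makes the necessity direction clean.

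The genuine gap is the necessity half of part (ii). Your reduction there is a triangle-inequality estimate: Fubini plus the borderline Forelli--Rudin integral gives $\|\mathcal H_{\mu,\alpha}(f)\|_{A^1_{\alpha-2}}\le C\int_{[0,1)}|f(t)|\log\frac{2}{1-t}\,d\mu(t)$, which is one-sided (an upper bound). Consequently "necessity follows from the same $f_b$, using $\int_{[b,1)}\log\frac{2}{1-t}\,d\mu\ge\log\frac{2}{1-b}\,\mu([b,1))$" does not follow: to test you need the \emph{reverse} inequality $\|\mathcal H_{\mu,\alpha}(f_b)\|_{A^1_{\alpha-2}}\gtrsim\int_{[0,1)}f_b(t)\log\frac{2}{1-t}\,d\mu(t)$, and that is precisely the nontrivial point. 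The naive fix -- integrating $|\mathcal H_{\mu,\alpha}(f_b)|$ over a single Carleson box of scale $1-b$, as in your part (i) -- fails here: it yields only $\mu([b,1))\lesssim(1-b)^{1/p}$ with no logarithm, because the Forelli--Rudin logarithm accumulates from all dyadic scales $1-b\le|1-z|\le 1$, not from one box. A correct repair is to restrict the area integral to a Stolz-type region $R=\{z:|\arg(1-z)|\le\pi/(4\alpha)\}$, where the identity $1-tz=(1-t)+t(1-z)$ gives $\mathrm{Re}\,(1-tz)^{-\alpha}\ge c_\alpha|1-tz|^{-\alpha}$ uniformly in $t$, and then to check that $\int_R(1-|z|^2)^{\alpha-2}|1-tz|^{-\alpha}\,dA(z)\gtrsim\log\frac{2}{1-t}$ still holds on $R$; this restores two-sidedness. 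The paper avoids the issue altogether by dualizing against $\mathcal B$ and testing the bilinear form with $g_a(z)=\log\frac{2}{1-az}$, so the logarithm enters through the test function rather than through a reverse norm estimate.
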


\begin{proof}
Suppose that $0<p<\infty$, since $\mu$ satisfies the condition in Theorem \ref{th:1}, as in the proof of this theorem, we obtain that
\begin{equation*}
\int_{[0,1)}|f(t)| \mathrm{d} \mu(t)<\infty, \quad f \in H^{p},
\end{equation*}
then for $f \in H^{p}$ and $ g \in A_{\alpha-2}^{1}(\alpha>1)$, we have that
\begin{equation*}
\aligned
(\alpha & -1) \int_{\mathbb{D}} \int_{[0,1)}\left|\frac{f(t) g(rz)(1-|z|^{2})^{\alpha-2}}{(1-t rz)^{\alpha}}\right| \mathrm{d}\mu(t) \mathrm{d} A(z) \\
& \leq \frac{1}{(1-r)^{\alpha}} \int_{[0,1)}|f(t)| \mathrm{d} \mu(t) \int_{\mathbb{D}}(\alpha-1)(1-|z|^{2})^{\alpha-2}|g(rz)| \mathrm{d} A(z) \\
& \leq \frac{C}{(1-r)^{\alpha}}\|g_r\|_{A_{\alpha-2}^{1}}\leq \frac{C}{(1-r)^{\alpha}}\|g\|_{A_{\alpha-2}^{1}},
\endaligned
\end{equation*}
where $0 \leq r<1$ and $g_r(z)=g(rz)$. Recall the reproducing kernel function for $A_{\beta}^{p}$:
\begin{equation*}
K_{\beta}(z, w)=\frac{1}{(1-z \bar{w})^{2+\beta}}, \quad \beta >-1,
\end{equation*}
by the reproducing property, we obtain that
\begin{equation*}
f(z)=(\beta+1)\int_{\mathbb{D}}(1-|w|^{2})^{\beta} f(w) K_{\beta}(z, w) \mathrm{d} A(w), \quad f \in A_{\beta}^{1}.
\end{equation*}
Thus, using Fubini's theorem, it implies that
\begin{equation}\label{Eq:4}
\aligned
\int_{\mathbb{D}} & \overline{\mathcal{H}_{\mu, \alpha}(f)(rz)} g(rz)(1-|z|^{2})^{\alpha-2} \mathrm{d} A(z)  \\
& =\int_{\mathbb{D}} \int_{[0,1)} \frac{\overline{f(t)} \mathrm{d} \mu(t)}{(1-t r\bar{z})^{\alpha}} g(rz)(1-|z|^{2})^{\alpha-2} \mathrm{d} A(z) \\
& =\int_{[0,1)} \int_{\mathbb{D}} \frac{(1-|z|^{2})^{\alpha-2} g(rz)}{(1-t r\bar{z})^{\alpha}} \mathrm{d} A(z) \overline{f(t)} \mathrm{d} \mu(t) \\
& =\frac{1}{\alpha-1} \int_{[0,1)} \overline{f(t)} g(r^2t) \mathrm{d} \mu(t), \quad 0\leq r <1, \,f \in H^p,\, g \in A_{\alpha-2}^{1} .
\endaligned
\end{equation}

(i) Now we consider the case $q > 1$ and $q \geq \alpha p$. According to the duality theorem for $A_{\beta}^{p}$ \cite[Theorem 2.1]{23}: for $1<p< \infty$ and $\beta >-1$, we have that
$(A_{\beta}^{p})^{*} \cong A_{\gamma}^{p^{\prime}}$ and $(A_{\gamma}^{p^{\prime}})^{*} \cong A_{\beta}^{p}$ under the Cauchy pairing
\begin{equation*}
\int_{\mathbb{D}} \overline{f(z)} g(z)(1-|z|^{2})^{\frac{\beta}{p}+\frac{\gamma}{p^{\prime}}} \mathrm{d} A(z), \quad f \in A_{\beta}^{p}, \,g \in A_{\gamma}^{p^{\prime}}, \, \gamma >-1.
\end{equation*}
Using this and (\ref{Eq:4}), we obtain that $\mathcal{H}_{\mu, \alpha}$ is a bounded operator from $H^{p}$ into $A_{\alpha-2}^{q}$ if and only if there exists a positive constant $C$ such that
\begin{equation}\label{Eq:5}
\left|\int_{[0,1)} \overline{f(t)} g(t) \mathrm{d} \mu(t)\right| \leq C\|f\|_{H^p}\|g\|_{A_{\alpha-2}^{q^{\prime}}}, \quad f \in H^{p}, \,g \in A_{\alpha-2}^{q^{\prime}} .
\end{equation}
Assume that $\mathcal{H}_{\mu, \alpha}$ is a bounded operator from $H^{p}$ into $A_{\alpha-2}^{q}$. Take the test functions
\begin{equation*}
f_{a}(z)=\left(\frac{1-a^{2}}{(1-a z)^{2}}\right)^{1 / p} \quad \text { and } \quad g_{a}(z)=\left(\frac{1-a^{2}}{(1-a z)^{2}}\right)^{\alpha / q^{\prime}}, \quad 0<a<1.
\end{equation*}
A calculation implies that $f_{a} \in H^{p},\,g_{a} \in A_{\alpha-2}^{q^{\prime}}$ and
\begin{equation*}
\sup _{a \in[0,1)}\|f\|_{H^{p}}<\infty, \,\sup _{a \in[0,1)}\|g\|_{A_{\alpha-2}^{q^{\prime}}}<\infty.
\end{equation*}
It follows that
\begin{equation*}
\aligned
\infty & >C \sup _{a \in[0,1)}\|f\|_{H^{p}} \sup _{a \in[0,1)}\|g\|_{A_{\alpha-2}^{q^{\prime}}} \\
& \geq\left|\int_{[0,1)} f_{a}(t) g_{a}(t) \mathrm{d} \mu(t)\right| \\
& \geq \int_{[a, 1)}\left(\frac{1-a^{2}}{(1-a t)^{2}}\right)^{1 / p}\left(\frac{1-a^{2}}{(1-a t)^{2}}\right)^{\alpha / q^{\prime}} \mathrm{d} \mu(t) \\
& \geq \frac{1}{(1-a^{2})^{1 / p+\alpha / q^{\prime}}} \,\mu([a, 1)) .
\endaligned
\end{equation*}
This implies that $\mu$ is a $(1 / p+\alpha / q^{\prime})$-Carleson measure.

Conversely, if $\mu$ is a $(1 / p+\alpha / q^{\prime})$-Carleson measure. Let $s=1+ \alpha p / q^{\prime}$, the conjugate exponent of s is $s^{\prime}=1+ q^{\prime} / \alpha p$ and $1 / p+\alpha / q^{\prime} =s / p =\alpha s^{\prime} / q^{\prime}$. By Lemma \ref{lm:3} and Lemma \ref{lm:4}, we obtain that there exists a constant $C>0$ such that
\begin{equation*}
\left(\int_{[0,1)}|f(t)|^{s} \mathrm{d} \mu(t)\right)^{1 / s} \leq C\|f\|_{H^{p}}, \quad \text { for all } f \in H^{p}
\end{equation*}
and
\begin{equation*}
\left(\int_{[0,1)}|g(t)|^{s^{\prime}} \mathrm{d} \mu(t)\right)^{1 / s^{\prime}} \leq C\|g\|_{A_{\alpha-2}^{q^{\prime}}}, \quad \text { for all } g \in A_{\alpha-2}^{q^{\prime}}.
\end{equation*}
Subsequently, applying Hölder's inequality, we can observe that
\begin{equation*}
\aligned
\int_{[0,1)}|f(t)||g(t)| \mathrm{d} \mu(t) & \leq\left(\int_{[0,1)}|f(t)|^{s} \mathrm{d} \mu(t)\right)^{1 / s}\left(\int_{[0,1)}|g(t)|^{s^{\prime}} \mathrm{d} \mu(t)\right)^{1 / s^{\prime}} \\
& \leq C\|f\|_{H^{p}}\|g\|_{A_{\alpha-2}^{q^{\prime}}} .
\endaligned
\end{equation*}
Hence, (\ref{Eq:5}) holds and then $\mathcal{H}_{\mu, \alpha}$ is a bounded operator from $H^{p}$ into $A_{\alpha-2}^{q}$.

(ii) Let us recall the duality theorem for $A_{\alpha-2}^{1}$ (see \cite[Theorem 5.15]{3}): $(\mathcal{B}_{0})^{*} \cong A_{\alpha-2}^{1}$ and $(A_{\alpha-2}^{1})^{*} \cong \mathcal{B}$ under the pairing
\begin{equation*}
\lim_{r \rightarrow 1^-}\int_{\mathbb{D}} \overline{f(rz)} g(rz)(1-|z|^{2})^{\alpha-2} \mathrm{d} A(z), \quad f \in \mathcal{B}\,(\text {resp., } \mathcal{B}_{0}), \,g \in A_{\alpha-2}^{1}.
\end{equation*}
This and (\ref{Eq:4}) imply that $\mathcal{H}_{\mu, \alpha}$ is a bounded operator from $H^{p}$ into $A_{\alpha-2}^{1}$ if and only if there exists a positive constant $C$ such that
\begin{equation*}
\left|\int_{[0,1)} \overline{f(t)} g(r^2t) \mathrm{d} \mu(t)\right| \leq C\|f\|_{H^{p}}\|g\|_{\mathcal{B}}, \quad f \in H^{p},\, g \in \mathcal{B}_{0} .
\end{equation*}
Assume that $\mathcal{H}_{\mu, \alpha}$ is a bounded operator from $H^{p}$ into $A_{\alpha-2}^{1}$. Take the test functions
\begin{equation*}
f_{a}(z)=\left(\frac{1-a^{2}}{(1-a z)^{2}}\right)^{1 / p} \quad \text { and } \quad g_{a}(z)=\log \frac{2}{1-a z}, \quad 0<a<1 .
\end{equation*}
A calculation gives that $g_{a} \in \mathcal{B}_{0}$ and $\sup _{a \in[0,1)}\|g\|_{\mathcal{B}}<\infty$.
Taking $r\in [a,1)$, it follows that
\begin{equation*}
\aligned
\infty & >C \sup _{a \in[0,1)}\|f\|_{H^{p}} \sup _{a \in[0,1)}\|g\|_{\mathcal{B}} \\
& \geq\left|\int_{[0,1)} f_{a}(t) g_{a}(r^2t) \mathrm{d} \mu(t)\right| \\
& \geq \int_{[a, 1)}\left(\frac{1-a^{2}}{(1-a t)^{2}}\right)^{1 / p} \log \frac{2}{1-ar^2t} \mathrm{d} \mu(t) \\
& \geq \frac{\log \frac{2}{1-a}}{(1-a)^{1 / p}} \,\mu([a, 1)) .
\endaligned
\end{equation*}
This implies that $\mu$ is a 1-logarithmic $1/p$-Carleson measure.

Conversely, assume that $\mu$ is a 1-logarithmic $1/p$-Carleson measure. As is well known, any function $g \in \mathcal{B}$\cite{4} possesses the following property:
\begin{equation*}
|g(z)| \leq C\|g\|_{\mathcal{B}} \log \frac{2}{1-|z|}.
\end{equation*}
Thus, if $0<p \leq 1$, for every $f \in H^{p}$ and $g \in \mathcal{B}_{0} \subset \mathcal{B}$, we have
\begin{equation*}
\left|\int_{[0,1)} \overline{f(t)} g(r^2t) \mathrm{d} \mu(t)\right| \leq C\|g\|_{\mathcal{B}} \int_{[0,1)}|f(t)| \log \frac{2}{1-t} \mathrm{d} \mu(t).
\end{equation*}
Let $\mathrm{d} \nu(t)=\log \frac{2}{1-t} \mathrm{d} \mu(t)$, by \cite[Proposition 2.5]{12}, we get that $\nu$ is a $1/p$-Carleson measure. Hence, Lemma \ref{lm:3} implies that
\begin{equation*}
\aligned
\left|\int_{[0,1)} \overline{f(t)} g(r^2t) \mathrm{d} \mu(t)\right|  &
\leq C\|g\|_{\mathcal{B}} \int_{[0,1)}|f(t)| \mathrm{d} \nu(t) \\
& \leq  C\|f\|_{H^{p}}\|g\|_{\mathcal{B}}, \quad f \in H^{p},\, g \in \mathcal{B}_{0} .
\endaligned
\end{equation*}

(iii) Using (\ref{lm:4}) and the duality theorem for $\mathcal{B}$, we obtain that $\mathcal{H}_{\mu,\alpha}$ is a bounded operator from $H^p$ into $\mathcal{B}$ if and only if
\begin{equation*}
\left|\int_{[0,1)} \overline{f(t)} g(r^2t) \mathrm{d} \mu(t)\right| \leq C\|f\|_{H^p}\|g\|_{A_{\alpha-2}^1}, \quad f \in H^p,\, g \in A_{\alpha-2}^1 .
\end{equation*}
Assume that $\mathcal{H}_{\mu, \alpha}$ is a bounded operator from $H^{p}$ into $\mathcal{B}$. Take the test functions
\begin{equation*}
f_{a}(z)=\left(\frac{1-a^{2}}{(1-a z)^{2}}\right)^{1 / p} \quad \text { and } \quad g_{a}(z)=\left(\frac{1-a^{2}}{(1-a z)^{2}}\right)^{\alpha}, \quad 0<a<1 .
\end{equation*}
A calculation gives that $g_{a} \in  A_{\alpha-2}^1$ and $\sup _{a \in[0,1)}\|g\|_{ A_{\alpha-2}^1}<\infty$.
For any $r \in [a,1)$, it follows that
\begin{equation*}
\aligned
\infty & >C \sup _{a \in[0,1)}\|f\|_{H^{p}} \sup _{a \in[0,1)}\|g\|_{A_{\alpha-2}^1} \\
& \geq\left|\int_{[0,1)} f_{a}(t) g_{a}(r^2t) \mathrm{d} \mu(t)\right| \\
& \geq \int_{[a, 1)}\left(\frac{1-a^{2}}{(1-a t)^{2}}\right)^{1 / p}\left(\frac{1-a^{2}}{(1-a r^2t)^{2}}\right)^{\alpha}  \mathrm{d} \mu(t) \\
& \geq \frac{1}{(1-a)^{1 / p+\alpha}}
 \,\mu([a, 1)) .
\endaligned
\end{equation*}
This implies that $\mu$ is a  $(1/p+\alpha)$-Carleson measure.

Conversely, assume that $\mu$ is a  $(1/p+\alpha)$-Carleson measure. It is well known that any function $f \in H^p$\cite{1} has the property
\begin{equation*}
|f(z)| \leq C \frac{\|f\|_{H^{p}}}{(1-|z|)^{1 / p}}.
\end{equation*}
Thus, we have
\begin{equation*}
\left|\int_{[0,1)} \overline{f(t)} g(r^2t) \mathrm{d} \mu(t)\right| \leq C\|f\|_{H^p} \int_{[0,1)}\frac{1}{{(1-t})^{1/p}} |g(r^2t)|\mathrm{d} \mu(t).
\end{equation*}
Similar to (ii), let $\mathrm{d} \nu(t)=\frac{1}{{(1-t})^{1/p}} \mathrm{d} \mu(t)$, then $\nu$ is an $\alpha$-Carleson measure. By Lemma \ref{lm:4}, we obtain that
\begin{equation*}
\aligned
\left|\int_{[0,1)} \overline{f(t)} g(r^2t) \mathrm{d} \mu(t)\right| \leq &
C\|f\|_{H^p} \int_{[0,1)} |g(r^2t)|\mathrm{d} \nu(t) \\
\leq & C\|f\|_{H^{p}}\|g\|_{A_{\alpha-2}^1}, \quad f \in H^{p},\, g \in A_{\alpha-2}^1 .
\endaligned
\end{equation*}
\end{proof}

\begin{corollary}
Suppose that $0<p<\infty$ and let $\mu$ be a positive Borel measure on $[0,1)$ which satisfies the condition in Theorem \ref{th:1}.

(i) If $q>1$ and $q \geq 2p$, then $\mathcal{DH}_{\mu}$ is a bounded operator from $H^p$ into $A^q$ if and only if $\mu$ is a $(1 / p+2 / q^{\prime})$-Carleson measure;

(ii) If $q=1$ and $q \geq p$, then $\mathcal{DH}_{\mu}$ is a bounded operator from $H^p$ into $A^1$ if and only if $\mu$ is a 1-logarithmic $1 / p$-Carleson measure;

(iii) $\mathcal{DH}_{\mu}$ is a bounded operator from $H^p$ into $\mathcal{B}$ if and only if $\mu$ is a $(1 / p+2)$-Carleson measure.
\end{corollary}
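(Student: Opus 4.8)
The plan is to recognize this corollary as the special case $\alpha = 2$ of Theorem~\ref{th:2}. The key observation, recalled in the introduction, is that the operator $\mathcal{DH}_\mu$ is precisely $\mathcal{H}_{\mu,2}$, and that setting $\alpha = 2$ gives $\alpha - 2 = 0$, so the target space $A_{\alpha-2}^q$ becomes the unweighted Bergman space $A_0^q = A^q$. Since $\alpha = 2 > 1$, the hypothesis $\alpha > 1$ of Theorem~\ref{th:2} holds, and $\mu$ is assumed to satisfy the condition in Theorem~\ref{th:1}; hence all standing assumptions transfer without change, and no new estimate is required.

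Concretely, I would substitute $\alpha = 2$ into each of the three parts. For part (i), the constraint $q \geq \alpha p$ becomes $q \geq 2p$ and the Carleson exponent $1/p + \alpha/q'$ becomes $1/p + 2/q'$, yielding the stated criterion for boundedness from $H^p$ into $A^q$. For part (ii), the value $\alpha = 2$ enters neither the constraint $q = 1 \geq p$ nor the order of the $1$-logarithmic $1/p$-Carleson condition, so the statement is verbatim the same as in Theorem~\ref{th:2}(ii). For part (iii), the Carleson exponent $1/p + \alpha$ becomes $1/p + 2$, giving the claimed equivalence for boundedness into $\mathcal{B}$.

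Since every assertion follows by direct substitution into an already-proved theorem, there is no genuine obstacle. The only points requiring care are the notational identification $\mathcal{DH}_\mu = \mathcal{H}_{\mu,2}$ and the bookkeeping remark that $A_{\alpha-2}^q = A^q$ when $\alpha = 2$. The proof therefore reduces to invoking Theorem~\ref{th:2} with $\alpha = 2$ and reading off the resulting Carleson conditions.
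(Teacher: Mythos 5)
Your proposal is correct and matches the paper's intent exactly: the corollary is stated without proof precisely because it is the specialization $\alpha=2$ of Theorem~\ref{th:2}, using the identification $\mathcal{DH}_\mu=\mathcal{H}_{\mu,2}$ and $A_{\alpha-2}^q=A_0^q=A^q$. Nothing further is needed.
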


\section{Compactness of $\mathcal{H}_{\mu,\alpha}$ from $H^p$ into $A_{\alpha-2}^q$}
Next, we characterize the measures $\mu$ such that $\mathcal{H}_{\mu,\alpha}$ is compact from $H^p$ into $A_{\alpha-2}^q$ and into $\mathcal{B}$. We begin by presenting a commonly used lemma for dealing with compactness.
\begin{lemma}\label{lm:5}
For $0<p,q<\infty$ and $\alpha>1$, suppose that $\mathcal{H}_{\mu,\alpha}$ is a bounded operator from $H^p$  into $A_{\alpha-2}^q$. Then $\mathcal{H}_{\mu,\alpha}$ is a compact operator if and only if $\mathcal{H}_{\mu,\alpha}(f_n) \rightarrow 0 $ in $A_{\alpha-2}^q$, for any bounded sequence $\{f_n\}$ in $H^p$  which converges to 0 uniformly on every compact subset of $\mathbb{D}$.
\end{lemma}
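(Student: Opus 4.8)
The statement is the standard sequential description of compactness for an operator between spaces of analytic functions, and the plan is to prove it by the usual normal-families scheme. The two soft inputs I intend to use are: (a) norm convergence in $H^p$ and in $A_{\alpha-2}^q$ both force uniform convergence on compact subsets of $\mathbb{D}$, because point evaluations are bounded on these spaces, as reflected in the growth estimate $|f(z)|\le C\|f\|_{H^p}(1-|z|)^{-1/p}$ and its Bergman analogue; and (b) a norm-bounded sequence in $H^p$ is a normal family, so by Montel's theorem it has a subsequence converging uniformly on compacta, whose limit again lies in $H^p$ with controlled norm, by applying Fatou's lemma to $M_p(r,\cdot)$.

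For the implication \emph{compact $\Rightarrow$ sequential criterion}, let $\{f_n\}$ be bounded in $H^p$ with $f_n\to0$ uniformly on compacta, and suppose toward a contradiction that $\mathcal{H}_{\mu,\alpha}(f_n)\not\to0$ in $A_{\alpha-2}^q$. Passing to a subsequence I may assume $\|\mathcal{H}_{\mu,\alpha}(f_{n_k})\|_{A_{\alpha-2}^q}\ge\varepsilon>0$; compactness then yields a further subsequence along which $\mathcal{H}_{\mu,\alpha}(f_{n_k})\to g$ in $A_{\alpha-2}^q$ with $\|g\|_{A_{\alpha-2}^q}\ge\varepsilon$. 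By (a) this convergence is uniform on compacta, hence holds coefficientwise, so the $m$-th Taylor coefficient of $g$ equals $\lim_k\frac{\Gamma(m+\alpha)}{m!\Gamma(\alpha)}\int_{[0,1)}t^m f_{n_k}(t)\,\mathrm d\mu(t)$, where I have used the series expansion of $\mathcal{H}_{\mu,\alpha}$ together with the integral representation of Theorem \ref{th:1}. The plan is to show each such limit is $0$, forcing $g=0$ and the desired contradiction.

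For the converse, assuming the sequential criterion, I take an arbitrary bounded sequence $\{f_n\}$ in $H^p$ and use (b) to extract $f_{n_k}\to f$ uniformly on compacta with $f\in H^p$. Then $\{f_{n_k}-f\}$ is bounded in $H^p$ and tends to $0$ uniformly on compacta, so the hypothesis gives $\mathcal{H}_{\mu,\alpha}(f_{n_k}-f)\to0$ in $A_{\alpha-2}^q$, i.e. $\mathcal{H}_{\mu,\alpha}(f_{n_k})\to\mathcal{H}_{\mu,\alpha}(f)$. Since the image of every bounded sequence thus has a norm-convergent subsequence, $\mathcal{H}_{\mu,\alpha}$ is compact.

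The crux --- and the one place the hypotheses on $\mu$ genuinely enter --- is showing $\int_{[0,1)}t^m f_{n_k}(t)\,\mathrm d\mu(t)\to0$ in the first implication. Since $f_{n_k}\to0$ uniformly on compacta the integrand tends to $0$ pointwise on $[0,1)$, so the issue is entirely the behaviour of the mass of $\mu$ near $t=1$, and a naive bound fails: I expect the \emph{boundedness} of $\mathcal{H}_{\mu,\alpha}$ into $A_{\alpha-2}^q$ to be essential here, since it is strictly stronger than the mere well-definedness condition of Theorem \ref{th:1}. I would exploit that a bounded $\mathcal{H}_{\mu,\alpha}$ forces $\mu$ to be an $s$-Carleson measure with $s>1/p$, as the test-function arguments of Theorem \ref{th:2} show; combined with the growth bound $|f_{n_k}(t)|\le C(1-t)^{-1/p}$, a layer-cake computation then gives $(1-t)^{-1/p}\in L^1(\mu)$, which supplies an integrable dominant and lets the dominated convergence theorem close the argument. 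The borderline logarithmic case (part (ii) of Theorem \ref{th:2}) is the most delicate, and there I would instead split the integral at a radius $\rho\to1$ and estimate the tail over $[\rho,1)$ by Hölder's inequality against the $L^s(\mu)$ bound furnished by Lemma \ref{lm:3}, using $\mu([\rho,1))\to0$ to obtain smallness uniform in $k$.
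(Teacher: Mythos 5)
Your overall scheme is the same as the paper's: the paper proves this lemma by appealing to the standard normal-families argument of \cite[Proposition 3.11]{24}, and your converse direction (Montel plus Fatou to extract a subsequence converging uniformly on compacta to a limit in $H^p$, then applying the criterion to the differences $f_{n_k}-f$) is exactly that argument and is correct. Your forward direction also correctly isolates the one step that is \emph{not} automatic for $\mathcal{H}_{\mu,\alpha}$, although it is trivial for composition operators, namely that $\int_{[0,1)}t^m f_{n_k}(t)\,\mathrm{d}\mu(t)\to 0$, so that the norm-limit $g$ has all Taylor coefficients zero. For $q>1$ your mechanism works: boundedness forces $\mu$ to be a $(1/p+\alpha/q')$-Carleson measure by the test functions of Theorem \ref{th:2}(i), the exponent $1/p+\alpha/q'$ is strictly larger than $1/p$, the layer-cake computation indeed gives $(1-t)^{-1/p}\in L^1(\mathrm{d}\mu)$, and dominated convergence closes that case.

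The genuine gap is your treatment of the borderline case $q=1$. There boundedness only makes $\mu$ a $1$-logarithmic $1/p$-Carleson measure, which is a $1/p$-Carleson measure with a logarithmic gain but is \emph{not} an $s$-Carleson measure for any $s>1/p$; consequently Lemma \ref{lm:3} furnishes an embedding of $H^p$ into $L^1(\mu)$ only, and no $L^s(\mu)$ bound with $s>1$ exists. The Hölder estimate you propose, $\int_{[\rho,1)}|f_{n_k}|\,\mathrm{d}\mu\le\bigl(\int|f_{n_k}|^{s}\,\mathrm{d}\mu\bigr)^{1/s}\,\mu([\rho,1))^{1/s'}$, therefore has no admissible exponent: with $s=1$ it degenerates ($s'=\infty$) and the factor $\mu([\rho,1))\to 0$ contributes nothing, so the step fails as stated. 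The repair stays inside the paper's toolkit: a $1$-logarithmic $1/p$-Carleson measure is a \emph{vanishing} $1/p$-Carleson measure, since $\mu([t,1))/(1-t)^{1/p}\lesssim 1/\log\frac{2}{1-t}\to 0$, hence the norms $\mathcal{N}_1(\mu_\rho)$ of the tail embeddings tend to $0$ (this is exactly (\ref{Eq:6})), giving $\int_{[\rho,1)}|f_{n_k}|\,\mathrm{d}\mu\le\mathcal{N}_1(\mu_\rho)\sup_k\|f_{n_k}\|_{H^p}$, which is the uniform-in-$k$ smallness you need; alternatively set $\mathrm{d}\nu=\log\frac{2}{1-t}\,\mathrm{d}\mu$, which is $1/p$-Carleson by \cite[Proposition 2.5]{12}, and bound the tail by $\bigl(\log\frac{2}{1-\rho}\bigr)^{-1}\int|f_{n_k}|\,\mathrm{d}\nu$. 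A smaller caveat: your measure-theoretic input covers only $q\ge 1$, while the lemma is stated for all $0<p,q<\infty$; for $p>1$ one can argue softly (a bounded sequence tending to $0$ on compacta is weakly null in the reflexive space $H^p$, and compact operators send weakly null sequences to norm-null ones), but the case $p\le 1$, $q<1$ is left open by your proof --- as it is, in fairness, by the paper's one-line citation, and the lemma is only ever applied with $q\ge1$.
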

\begin{proof}
The proof is similar to that of \cite[Proposition 3.11]{24} and details are omitted.
\end{proof}

\begin{theorem}\label{th:3}
Suppose that $0<p<\infty$ and $\alpha>1$, let $\mu$ be a positive Borel measure on $[0,1)$ which satisfies the condition in Theorem \ref{th:1}.

(i) If $q>1$ and $q \geq \alpha p$, then $\mathcal{H}_{\mu,\alpha}$ is a compact operator from $H^p$ into $A_{\alpha-2}^q$ if and only if $\mu$ is a vanishing $(1 / p+\alpha  / q^{\prime})$-Carleson measure;

(ii) If $q=1$ and $q \geq p$, then $\mathcal{H}_{\mu,\alpha}$ is a compact operator from $H^p$ into $A_{\alpha-2}^1$ if and only if $\mu$ is a vanishing 1-logarithmic $1 / p$-Carleson measure;

(iii) $\mathcal{H}_{\mu,\alpha}$ is a compact operator from $H^p$ into $\mathcal{B}$ if and only if $\mu$ is a vanishing $(1 / p + \alpha)$-Carleson measure.
\end{theorem}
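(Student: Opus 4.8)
The plan is to mirror the structure of Theorem \ref{th:2}, exploiting the fact that compactness of these operators is governed by the \emph{vanishing} version of exactly the Carleson condition that governed boundedness. Throughout I would lean on Lemma \ref{lm:5}, which reduces compactness to showing $\mathcal{H}_{\mu,\alpha}(f_n)\to 0$ in the target space for every bounded sequence $\{f_n\}$ in $H^p$ tending to $0$ uniformly on compact subsets of $\mathbb{D}$. Since each case (i)--(iii) has a bounded/unbounded counterpart already proved, I would only need to upgrade the two directions separately: necessity (from compactness deduce the vanishing Carleson condition) and sufficiency (from the vanishing condition deduce compactness).

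For necessity I would reuse the same test functions as in Theorem \ref{th:2}: in case (i) the pair $f_a(z)=\bigl((1-a^2)/(1-az)^2\bigr)^{1/p}$ and $g_a(z)=\bigl((1-a^2)/(1-az)^2\bigr)^{\alpha/q'}$, and the analogous pairs (with the logarithmic $g_a$ in (ii) and $g_a(z)=\bigl((1-a^2)/(1-az)^2\bigr)^{\alpha}$ in (iii)). The key new observation is that as $a\to 1^-$ these $f_a$ form a bounded family in $H^p$ that converges to $0$ uniformly on compact subsets of $\mathbb{D}$; hence by Lemma \ref{lm:5} and the duality pairing \eqref{Eq:4}, the corresponding pairings must tend to $0$. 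Running the same lower bound estimate as in Theorem \ref{th:2} but now tracking the limit, the inequality
\begin{equation*}
\frac{\mu([a,1))}{(1-a^2)^{1/p+\alpha/q'}} \leq C\,\bigl\|\mathcal{H}_{\mu,\alpha}(f_a)\bigr\|_{A_{\alpha-2}^q}\,\sup_a\|g_a\|_{A_{\alpha-2}^{q'}}\longrightarrow 0
\end{equation*}
shows that $\mu([a,1))=o\bigl((1-a)^{1/p+\alpha/q'}\bigr)$, i.e.\ $\mu$ is a vanishing $(1/p+\alpha/q')$-Carleson measure; the analogous computation handles (ii) and (iii) with the logarithmic factor inserted in (ii).

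For sufficiency I would argue directly through Lemma \ref{lm:5}. Given a vanishing Carleson condition and a normalized sequence $\{f_n\}$ converging to $0$ uniformly on compacta, I would split the relevant integral at a radius $r_0$ close to $1$. On $[0,r_0)$ the uniform convergence of $f_n$ to $0$ forces the contribution to $0$; on $[r_0,1)$ the vanishing condition means the tail mass $\mu([r_0,1))$ is small, and I would estimate that piece by the same Hölder/Lemma~\ref{lm:3}/Lemma~\ref{lm:4} machinery used in Theorem \ref{th:2}, but applied to the truncated measure $\mu|_{[r_0,1)}$, whose Carleson norm is controlled by the vanishing quantity and hence can be made arbitrarily small. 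For (ii) and (iii) I would again pass to the auxiliary measures $\mathrm{d}\nu(t)=\log\frac{2}{1-t}\,\mathrm{d}\mu(t)$ and $\mathrm{d}\nu(t)=(1-t)^{-1/p}\,\mathrm{d}\mu(t)$ respectively, checking that the vanishing hypothesis on $\mu$ transfers to a vanishing Carleson condition for $\nu$ (via the cited propositions), so the same splitting argument closes the estimate.

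The main obstacle I anticipate is making the truncation argument rigorous in the low-regularity cases, particularly $0<p\le 1$ where $H^p$ is not locally convex and the duality pairings in \eqref{Eq:4} are more delicate; one must confirm that the truncated-measure Carleson constants genuinely shrink and that the splitting commutes with the Cauchy pairing and the limit $r\to 1^-$. A secondary technical point is verifying that the test functions $f_a$ are uniformly bounded in $H^p$ \emph{and} tend to zero locally uniformly simultaneously, which is clear for the Poisson-type kernels but should be stated explicitly so that Lemma \ref{lm:5} applies cleanly.
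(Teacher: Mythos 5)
Your proposal is correct and follows essentially the same route as the paper: necessity via the same test functions $f_a$, $g_a$ fed into Lemma \ref{lm:5} and the pairing (\ref{Eq:4}), with the lower bound tracked in the limit, and sufficiency via splitting the integral at a radius $r$, killing the inner part by locally uniform convergence and the tail by the fact that the embedding norms of the truncated measures $\mu_r$ (and of the auxiliary measures $\nu$ in (ii) and (iii)) tend to zero. The technical points you flag are resolved in the paper exactly as you anticipate, so there is no gap.
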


\begin{proof}
Firstly, let $0<p \leq q<\infty$, if $\mu$ is a $q / p$-Carleson measure, then the identity mapping $i$ is well defined from $H^{p}$ into $L^{q}(\mathbb{D}, \mu)$ by Lemma \ref{lm:3}, we denote the norm of $i$ by $\mathcal{N}_{1}(\mu)$. For $0<r<1$, let
\begin{equation*}
\mathrm{d} \mu_{r}(z)=\chi_{r<|z|<1}(t) \mathrm{d} \mu(t),
\end{equation*}
then $\mu$ is a vanishing $q / p$-Carleson measure if and only if
\begin{equation}\label{Eq:6}
\mathcal{N}_{1}(\mu_{r}) \rightarrow 0 \quad \text { as } r \rightarrow 1^{-}.
\end{equation}

Similarly, according to Lemma \ref{lm:4}, the identity mapping $j$ is well defined from $A_{\alpha-2}^{p}$ into $L^{q}(\mathbb{D}, \mu)$ if $\mu$ is an $\alpha q / p$-Carleson measure ($\alpha>1$). Let $\mathcal{N}_{2}(\mu)$ be the norm of $j$, then for the above defined $\mathrm{d} \mu_{r}$, we obtain that $\mu$ is a vanishing $\alpha  q / p$-Carleson measure if and only if
\begin{equation}\label{Eq:7}
\mathcal{N}_{2}(\mu_{r}) \rightarrow 0 \quad \text { as } r \rightarrow 1^{-}.
\end{equation}

(i) Assume that $q>1$, $q \geq \alpha p$ and $\mathcal{H}_{\mu, \alpha}$ is a compact operator from $H^{p}$ into $A_{\alpha-2}^{q}$. Let $\{a_{n}\} \subset(0,1)$ be any sequence with $a_{n} \rightarrow 1$. Take
\begin{equation*}
f_{a_{n}}(z)=\left(\frac{1-a_{n}^{2}}{(1-a_{n} z)^{2}}\right)^{1 / p}, \quad z \in \mathbb{D},
\end{equation*}
then  $f_{a_{n}}(z) \in H^{p},\, \sup _{n \geq 1}\|f_{a_{n}}\|_{H^{p}}<\infty$ and $f_{a_{n}}\rightarrow0$ uniformly on any compact subset of $\mathbb{D}$. Using Lemma \ref{lm:5}, we have that $\{\mathcal{H}_{\mu, \alpha}(f_{a_{n}})\}$ converges to 0 in $A_{\alpha-2}^{q}$. This and (\ref{Eq:4}) imply that
\begin{equation}\label{Eq:8}
\lim _{n \rightarrow \infty} \int_{[0,1)} \overline{f_{a_{n}}(t)} g(t) \mathrm{d} \mu(t)
=(\alpha-1)\lim _{n \rightarrow \infty} \int_{\mathbb{D}} \overline{\mathcal{H}_{\mu, \alpha}(f_{a_{n}})(z)} g(z) (1-|z|^{2})^{\alpha-2} \mathrm{d}A(z)=0,
\end{equation}
where $g \in A_{\alpha-2}^{q^{\prime}}$. Now we take
\begin{equation*}
g_{a_{n}}(z)=\left(\frac{1-a_{n}^{2}}{(1-a_{n} z)^{2}}\right)^{\alpha / q^{\prime}} \in A_{\alpha-2}^{q^{\prime}}, \quad z \in \mathbb{D},
\end{equation*}
then we have that
\begin{equation*}
\aligned
& \int_{[0,1)} \overline{f_{a_{n}}(t)} g_{a_{n}}(t) \mathrm{d} \mu(t) \\
& \quad \geq \int_{a_{n}}^{1}\left(\frac{1-a_{n}^{2}}{(1-a_{n} t)^{2}}\right)^{1 / p}\left(\frac{1-a_{n}^{2}}{(1-a_{n} t)^{2}}\right)^{\alpha / q^{\prime}} \mathrm{d} \mu(t) \\
& \quad \geq \frac{C}{(1-{a_{n}}^2)^{1/ p+\alpha/ q^{\prime}}}\, \mu([a_{n}, 1)) .
\endaligned
\end{equation*}
Using (\ref{Eq:8}) and $\{a_{n}\} \subset(0,1)$ is a sequence with $a_{n} \rightarrow 1$, we obtain that
\begin{equation*}
\lim _{a \rightarrow 1^{-}} \frac{1}{(1-a)^{1 / p+\alpha / q^{\prime}}}\, \mu([a, 1))=0,
\end{equation*}
which implies that $\mu$ is a vanishing $(1 / p+\alpha / q^{\prime})$-Carleson measure.

Conversely, suppose that $\mu$ is a vanishing $(1 / p+\alpha  / q^{\prime})$-Carleson measure. Let $\{f_{n}\}_{n=1}^{\infty}$ be a sequence of $H^{p}$ functions with $\sup _{n \geq 1}\|f_{n}\|_{H^{p}}<\infty$, and $\{f_{n}\} $ converges to 0 uniformly on every compact subset of $\mathbb{D}$, it suffices to prove that $\mathcal{H}_{\mu, \alpha}(f_{n}) \rightarrow 0$ in $A_{\alpha-2}^{q}$ by Lemma \ref{lm:5}. Taking $g \in A_{\alpha-2}^{q^{\prime}}$, we have that
\begin{equation*}
\int_{[0,1)}|f_{n}(t)||g(t)| \mathrm{d} \mu(t)=\int_{[0, r)}|f_{n}(t)||g(t)| \mathrm{d} \mu(t)+\int_{[r, 1)}|f_{n}(t)||g(t)| \mathrm{d} \mu(t) .
\end{equation*}
Since $\{f_{n}\} \rightarrow 0$ uniformly on compact subsets of $\mathbb{D}$, we obtain that
\begin{equation*}
\int_{[0, r)}|f_{n}(t)||g(t)| \mathrm{d} \mu(t) \rightarrow 0.
\end{equation*}
Based on the proof of Theorem \ref{th:2}, part(i), we have that
\begin{equation*}
\aligned
\int_{[r, 1)}|f_{n}(t)||g(t)| \mathrm{d} \mu(t) & \leq\left(\int_{[0,1)}|f_{n}(t)|^{s} \mathrm{d} \mu_{r}(t)\right)^{1 / s}\left(\int_{[0,1)}|g(t)|^{s^{\prime}} \mathrm{d} \mu_{r}(t)\right)^{1 / s^{\prime}} \\
& \leq\mathcal{N}_{1}(\mu_{r})\mathcal{N}_{2}(\mu_{r})\|f\|_{H^{p}}\|g\|_{A_{\alpha-2}^{q^{\prime}}}.
\endaligned
\end{equation*}
By (\ref{Eq:6}) and (\ref{Eq:7}),
\begin{equation*}
\lim _{n \rightarrow \infty} \int_{[0,1)}|f_{n}(t) || g(t)| \mathrm{d} \mu(t)=0, \quad \text { for all } g \in A_{\alpha-2}^{q^{\prime}}.
\end{equation*}
Thus
\begin{equation*}
\lim _{n \rightarrow \infty}\left|\int_{\mathbb{D}} \overline{\mathcal{H}_{\mu, \alpha}(f_{n})(z)} g(z)(1-|z|^{\alpha-2}) \mathrm{d} A(z)\right|=0, \quad \text { for all } g \in A_{\alpha-2}^{q^{\prime}}.
\end{equation*}
This implies that $\mathcal{H}_{\mu, \alpha}(f_{n}) \rightarrow 0$ in $A_{\alpha-2}^{q}$, then we obtain $\mathcal{H}_{\mu, \alpha}$ is a compact operator from $H^{p}$ into $A_{\alpha-2}^{q}$.

(ii) Let $0<p \leq q < \infty$ and $q=1$, assuming $\mathcal{H}_{\mu, \alpha}$ is a compact operator from $H^{p}$ into $A_{\alpha-2}^{1}$. For any sequence  $\{a_{n}\} \subset(0,1)$ with $a_{n} \rightarrow 1$, using the function $f_{a_{n}}$ as in the proof of (i), then $\{\mathcal{H}_{\mu, \alpha}(f_{a_{n}})\} \rightarrow 0$ in $A_{\alpha-2}^{1}$. In this way, we obtain \begin{equation}\label{Eq:9}
\lim _{n \rightarrow \infty} \int_{[0,1)} \overline{f_{a_{n}}(t)} g(r^2t) \mathrm{d} \mu(t)
=(\alpha-1)\lim _{n \rightarrow \infty} \int_{\mathbb{D}} \overline{\mathcal{H}_{\mu, \alpha}(f_{a_{n}})(rz)} g(rz) (1-|z|^{2})^{\alpha-2} \mathrm{d}A(z)=0,
\end{equation}
where $g \in \mathcal{B}_{0}$. Take $g_{a_{n}}(z)=\log \frac{2}{1-a_{n} z}\in \mathcal{B}_{0}$,  for $r \in (a_n ,1)$, we infer that
\begin{equation*}
\aligned
& \int_{[0,1)} \overline{f_{a_{n}}(t)} g_{a_{n}}(r^2t) \mathrm{d} \mu(t) \\
& \quad \geq C \int_{[a_{n}, 1)}\left(\frac{1-a_{n}^{2}}{(1-a_{n} t)^{2}}\right)^{1 / p} \log \frac{2}{1-a_{n} r^2t} \mathrm{d} \mu(t) \\
& \quad \geq C \frac{ \log \frac{2}{1-a_{n}}}{(1-{a_{n}})^{1/ p}}\, \mu([a_{n}, 1)) .
\endaligned
\end{equation*}
Letting $n \rightarrow \infty$ and we have
\begin{equation*}
\lim _{a \rightarrow 1^{-}} \frac{\log \frac{2}{1-a}}{(1-a)^{1 / p}} \,\mu([a, 1))=0.
\end{equation*}
Thus $\mu$ is a vanishing 1-logarithmic $1 / p$-Carleson measure.

Conversely, suppose that $\mu$ is a vanishing 1-logarithmic $1 / p$-Carleson measure, we obtain that $\mathrm{d}  \nu (t) = \log \frac{2}{1-t}\mathrm{d} \mu(t)$ is a vanishing $1 / p$-Carleson measure. Using the same function sequence $\{f_{n}\}_{n=1}^{\infty}$ in the proof of (i), then we only need to determine that $\{\mathcal{H}_{\mu, \alpha}(f_{n})\} \rightarrow 0$ in $A_{\alpha-2}^{1}$.
For every $g \in \mathcal{B}_{0}$ and $0<r<1$, we get that
\begin{equation*}
\aligned
\int_{[0,1)}|f_{n}(t)||g(t)| \mathrm{d} \mu(t)= & \int_{[0, r)}|f_{n}(t)||g(t)| \mathrm{d} \mu(t)+\int_{[r, 1)}|f_{n}(t)||g(t)| \mathrm{d} \mu(t) \\
\leq & \int_{[0, r)}|f_{n}(t)||g(t)| \mathrm{d} \mu(t) +C\|g\|_{\mathcal{B}} \int_{[r, 1)}|f_{n}(t)| \log \frac{2}{1-t} \mathrm{d} \mu(t) \\
\leq & \int_{[0, r)}|f_{n}(t)||g(t)| \mathrm{d} \mu(t)+C\|g\|_{\mathcal{B}} \int_{[0,1)}|f_{n}(t)| \mathrm{d} v_{r}(t) \\
\leq & \int_{[0, r)}|f_{n}(t)||g(t)| \mathrm{d} \mu(t)+C \mathcal{N}_{1}(v_{r}) \|f_{n}\|_{H^{p}}\|g\|_{\mathcal{B}}.
\endaligned
\end{equation*}
Then we get that
\begin{equation*}
\lim _{n \rightarrow \infty} \int_{[0,1)}|f_{n}(t)  || g(t)| \mathrm{d} \mu(t)=0, \quad \text { for all } g \in \mathcal{B}_{0}.
\end{equation*}
Using this and (\ref{Eq:9}), we have
\begin{equation*}
\lim_{n \rightarrow \infty} \left(\lim _{r \rightarrow 1^-}\left|\int_{\mathbb{D}} \overline{\mathcal{H}_{\mu, \alpha}(f_{n})(rz)} g(rz)(1-|z|^{2})^{\alpha-2} \mathrm{d} A(z)\right|\right)=0, \quad \text { for all } g \in \mathcal{B}_{0}.
\end{equation*}
This implies $\mathcal{H}_{\mu, \alpha}(f_{n}) \rightarrow 0$ in $A_{\alpha-2}^{1}$ and then we establish that $\mathcal{H}_{\mu, \alpha}$ is a compact operator from $H^{p}$ into $A_{\alpha-2}^{1}$.

The method used in (iii) is similar to (ii), we omit the details here.
\end{proof}

\begin{corollary}
Suppose that $0<p<\infty$ and let $\mu$ be a positive Borel measure on $[0,1)$ which satisfies the condition in Theorem \ref{th:1}.

(i) If $q>1$ and $q \geq 2p$, then $\mathcal{DH}_{\mu}$ is a compact operator from $H^p$ into $A^q$ if and only if $\mu$ is a vanishing $(1 / p+2 / q^{\prime})$-Carleson measure;

(ii) If $q=1$ and $q \geq p$, then $\mathcal{DH}_{\mu}$ is a compact operator from $H^p$ into $A^1$ if and only if $\mu$ is a vanishing 1-logarithmic $1 / p$-Carleson measure;

(iii) $\mathcal{DH}_{\mu}$ is a compact operator from $H^p$ into $\mathcal{B}$ if and only if $\mu$ is a vanishing $(1 / p+2)$-Carleson measure.
\end{corollary}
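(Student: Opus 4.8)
The plan is to obtain this corollary as the special case $\alpha = 2$ of Theorem \ref{th:3}, with no independent argument required. First I would recall from the Introduction that $\mathcal{DH}_{\mu}$ denotes precisely the operator $\mathcal{H}_{\mu, 2}$, so that every assertion about $\mathcal{DH}_{\mu}$ is an assertion about $\mathcal{H}_{\mu, \alpha}$ with $\alpha = 2$. Since $\alpha = 2 > 1$, the standing hypothesis $\alpha > 1$ of Theorem \ref{th:3} is satisfied, and the measure $\mu$ is assumed to satisfy the condition of Theorem \ref{th:1}; hence both hypotheses of Theorem \ref{th:3} are in force.

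Next I would record the identifications that make the statements match. With $\alpha = 2$ the weight exponent is $\alpha - 2 = 0$, so the target space $A_{\alpha-2}^{q}$ is the unweighted Bergman space $A_{0}^{q} = A^{q}$. The conjugate-index quantity $\alpha / q^{\prime}$ becomes $2 / q^{\prime}$, the Bloch-case exponent $1/p + \alpha$ becomes $1/p + 2$, and the growth restriction $q \geq \alpha p$ becomes $q \geq 2p$, while the restriction $q \geq p$ in the $q = 1$ case is independent of $\alpha$. Under these substitutions the three parts of Theorem \ref{th:3} transcribe verbatim into the three parts of the corollary: part (i) yields compactness from $H^{p}$ into $A^{q}$ exactly when $\mu$ is a vanishing $(1/p + 2/q^{\prime})$-Carleson measure, part (ii) yields compactness from $H^{p}$ into $A^{1}$ exactly when $\mu$ is a vanishing $1$-logarithmic $1/p$-Carleson measure, and part (iii) yields compactness from $H^{p}$ into $\mathcal{B}$ exactly when $\mu$ is a vanishing $(1/p + 2)$-Carleson measure.

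Since the result is a pure specialization, there is essentially no obstacle. The only point requiring care is to carry out the notational identification $\mathcal{DH}_{\mu} = \mathcal{H}_{\mu, 2}$ together with the substitution $\alpha = 2$ consistently across all three parts, so that both the index conditions and the Carleson-measure characterizations of Theorem \ref{th:3} pass over unchanged.
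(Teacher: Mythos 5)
Your proposal is correct and is exactly how the paper obtains this corollary: it is the specialization $\alpha=2$ of Theorem \ref{th:3}, using the identification $\mathcal{DH}_{\mu}=\mathcal{H}_{\mu,2}$ and $A_{\alpha-2}^{q}=A^{q}$, which is why the paper states it without further proof. Your careful transcription of the index conditions ($q\geq\alpha p \mapsto q\geq 2p$, $\alpha/q'\mapsto 2/q'$, $1/p+\alpha\mapsto 1/p+2$) matches the intended argument completely.
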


\vskip 6mm
\noindent{\bf Acknowledgments}

\noindent   The research was supported by Zhejiang Provincial Natural Science Foundation (Grant No.
 LY23A010003) and National Natural Science Foundation of China (Grant No. 11671357).

\end{document}